\newtheorem{theo}{Theorem}[section]
\newtheorem{prop}{Proposition}[section]
\theoremstyle{definition}
\newtheorem{rem}{Remark}[section]
\numberwithin{equation}{section}
\newcommand{\R}{\mathbb R}
\newcommand{\W}{\mathcal W}
\newcommand{\de}{\partial}
\newcommand{\eps}{\varepsilon}
\DeclareMathOperator{\divergenza}{div}
\DeclareMathOperator{\sign}{sign}
\begin{document}
\title[Sharp a priori estimates]{Sharp estimates and existence for anisotropic elliptic problems with general growth in the gradient} 
\author[F. Della Pietra, N. Gavitone]{Francesco Della Pietra and
  Nunzia Gavitone}
\address{Francesco Della Pietra \\
Universit\`a degli studi di Napoli Federico II\\
Dipartimento di Matematica e Applicazioni ``R. Caccioppoli''\\
80126 Napoli, Italia.
}
\email{f.dellapietra@unina.it}

\address{Nunzia Gavitone,
Universit\`a degli studi di Napoli Federico II \\
Dipartimento di Matematica e Applicazioni ``R. Caccioppoli''\\
80126 Napoli, Italia.
}
\email{nunzia.gavitone@unina.it}
\keywords{Nonlinear elliptic problems with gradient dependent terms,
  anisotropic Laplacian, convex symmetrization, a priori estimates}
\subjclass[2010]{35J60 35B45}
\date{\today}
\maketitle
\begin{abstract}
  In this paper, we prove sharp estimates and existence results for anisotropic nonlinear elliptic problems with lower order terms depending on the gradient. Our prototype is: 
  \begin{equation*}
\left\{
\begin{array}{ll}
-\mathcal Q_{p}u =[H(Du)]^{q}+f(x) &\text{in }\Omega,\\
u=0&\text{on }\de\Omega.
\end{array}
\right.
\end{equation*}
Here $\Omega$ is a bounded open set of $\R^{N}$, $N\ge 2$, $0<p-1<q\le p<N$, and $\mathcal Q_{p}$ is the anisotropic operator
\[
\mathcal Q_{p} u =\divergenza \left( [H(Du)]^{p-1}H_{\xi}(Du) \right),
\] 
where $H$ is a suitable norm of $\R^{N}$. Moreover, $f$ belongs to an appropriate Marcinkiewicz space. 
\end{abstract}

\section{Introduction}
Let $\Omega$ be a bounded open set of $\R^{N}$, $N\ge 2$, and $1<p<N$. Consider a convex, 1-homogeneous function $H\colon \R^{N}\rightarrow [0,+\infty[$ in $ C^{1}(\R^{N}\setminus\{0\})$. The aim of this paper is to obtain sharp a priori estimates and existence results for elliptic Dirichlet problems modeled on the following:
\begin{equation}
\label{intro:pb}
\left\{
\begin{array}{ll}
-\mathcal Q_{p}u =[H(Du)]^{q}+f(x) &\text{in }\Omega,\\
u=0&\text{on }\de\Omega,
\end{array}
\right.
\end{equation}
where $p-1<q\le p$, and $Q_{p}$ is the anisotropic operator
\[
\mathcal Q_{p} u =\divergenza \left( [H(Du)]^{p-1}H_{\xi}(Du) \right).
\] 
Moreover, we assume that $f$ belongs to the Marcinkiewicz space $M^{\frac N\gamma}(\Omega)$, with $\gamma=\frac{q}{q-(p-1)}$. In order to consider a datum $f$ which is (at least) in $L^{1}$, we will suppose that $\frac{N}{N-1}(p-1)<q\le p$. In general, $\mathcal Q_{p} $ is highly nonlinear, and it extends some well-known classes of operators. In particular, for $H(\xi)= ( \sum_k |\xi_k|^r )^\frac{1}{r}$, $r>1$, $\mathcal Q_{p}$ becomes
\begin{equation*}
  \mathcal Q_{p} v= \sum_{i=1}^{N} \frac{\de}{\de x_i} \left(
    \left( \sum_{k=1}^{N} \left| \frac{\de v}{\de x_k} \right|^r
    \right)^{(p-r)/r} \left| \frac{\de v}{\de x_i}
    \right|^{r-2}\frac{\de v}{ \de x_i}\right).
\end{equation*}
Note that for $r=2$, it coincides with the usual $p$-Laplace
operator, while for $r=p$ it is the so-called pseudo-$p$-Laplace
operator. This kind of operators has attracted an increasing interest in recent years. We refer, for example, to \cite{dpg3,ferkaw,aflt} ($p=2$) and \cite{dpgtors,dpg4,bkj06,bfk} ($1<p<+\infty$) where Dirichlet boundary conditions are considered. Moreover, for Neumann boundary values see for instance \cite{dpg2,wxpac} ($p=2$), while for the Robin case see \cite{dpgrobin}. 

 In the Euclidean setting, that is when $H(\xi)=(\sum_{i}\xi_{i}^{2})^{\frac12}$, problem \eqref{intro:pb} reduces to
\begin{equation}
\label{intro:euc}
\left\{
\begin{array}{ll}
-\Delta_{p}u =|Du|^{q}+f(x) &\text{in }\Omega,\\
u=0&\text{on }\de\Omega,
\end{array}
\right.
\end{equation}
where $\Delta_{p}$ is the well-known $p$-Laplace operator. 

Problem \eqref{intro:euc} has been widely studied in literature. In general, for equations with $q$-growth in the gradient, existence results can be given under suitable sign conditions on the gradient-dependent term (see for example \cite{bmp92} and the references therein). On the other hand, if $f\in L^{r}(\Omega)$, in order to obtain an existence result for \eqref{intro:euc} it is necessary to impose a smallness assumption on the $L^{r}$ norm of $f$. For example, if $f\in L^{r}$, $r>\frac N p$, and $\|f\|_{r}$ is small enough, then a bounded solution exists (see for instance \cite{kazkra89,mps}). As regards the case of unbounded solutions, depending on the summability of $f$, several results are known. 
For example, in \cite{fm00}, the case of $q=p$ and $f\in L^{N/p}$ is considered, and a sharp condition (in a suitable sense) on $\|f\|_{N/p}$ is given. For the general case $p-1<q\le p$, with different summability assumptions of $f$, we refer the reader to\cite{afm14,abddaper,dp,femu13,dpg4,dpper,gmp12,fermes,hmv99,tr03,chi00}. 

In this paper we deal with a problem whose prototype is \eqref{intro:pb}, for a general norm $H$ (see Section 2 for the precise assumptions), and looking for solutions in $W_{0}^{1,q}(\Omega)$ not necessarily bounded. More precisely, under a suitable smallness hypothesis on $\|f\|_{M^{N/\gamma}}$, $\gamma=\big(\frac{q}{p-1}\big)'$, we obtain some sharp a priori estimates, comparing the solutions of suitable approximating problems of \eqref{intro:pb}, with the solutions of the anisotropic radially symmetric problem 
\begin{equation}
\label{intro:pbrad}
\left\{
\begin{array}{ll}
-\mathcal Q_{p}u =[H(Du)]^{q}+\frac{\lambda}{H^{o}(x)^{\gamma}} &\text{in }\Omega^{\star},\\
u=0&\text{on }\de\Omega^{\star}.
\end{array}
\right.
\end{equation}
Here $H^{o}$ is the polar function of $H$, $\Omega^{\star}$ is the sublevel set of $H^{o}$ with the same Lebesgue measure of $\Omega$ and $\lambda=\kappa_{N}^{\gamma/N}\|f\|_{M^{{\frac N \gamma}}}$, with $\kappa_{N}=|\{x\colon H^{o}(x)< 1\}|$ (see  Section 2 for the precise definitions).  
The comparison result is obtained by means of symmetrization techniques. Taking into account the structure of the equation, we use a suitable notion of symmetrization, known as convex symmetrization (see \cite{aflt}, and Section 2 for the definition). In this order of ideas, to obtain uniform bounds on the solutions of approximating problems it is sufficient to study the anisotropic radial problem \eqref{intro:pbrad}. Hence, a key role is played by an existence and uniqueness result for a special class of positive solutions of \eqref{intro:pbrad} whose level sets are homothetic to $H^{o}$. This kind of solutions $u$ are exactly the ones that allow to perform a change of variable 
$V=\varphi(u)$, such that $V$ solves
\begin{equation}
\label{intro:pbradcambio}
\left\{
\begin{array}{ll}
-\mathcal Q_{p}V = \frac{\lambda}{H^{o}(x)^{\gamma}}\left(\frac{V+1}{\gamma-1}\right)^{\gamma-1} &\text{in }\Omega^{\star},\\
V=0&\text{on }\de\Omega^{\star}.
\end{array}
\right.
\end{equation}
The solutions of \eqref{intro:pbradcambio} can be explicitly written, and then also the solutions of \eqref{intro:pbrad}. 

The structure of the paper is the following. In Section 2, we recall the notation and the main assumptions used throughout all the paper, and we state the main results. In Section 3, we study the anisotropic radial problem \eqref{intro:pbrad}. Finally, in Section 4 we prove the quoted comparison result and a priori estimates for the approximating problems. Finally, we give the proof of the main results.

\section{Notation, preliminaries and main results}
Let $N\ge 2$, and $H:\R^N\rightarrow [0,+\infty[$ be a $C^1(\mathbb
R^N\setminus\{0\})$ function such that 
\begin{equation}
  \label{eq:omo}
  H(t\xi)= |t| H(\xi), \quad \forall \xi \in \R^N,\; \forall t \in \R,
\end{equation}
and such that any level set $\{\xi\in \R^{n}\colon H(\xi)\le t\}$, 
with $t>0$ is strictly convex. 
Moreover, suppose that there exist two positive constants $c_1
\le c_2$ such that
\begin{equation}
  \label{eq:lin}
  c_1|\xi| \le H(\xi) \le c_2|\xi|,\quad \forall \xi\in \R^N.
\end{equation}

\begin{rem}
	We stress that the homogeneity of $H$ and the convexity of its level sets imply the convexity of $H$. Indeed, by \eqref{eq:omo}, it is sufficient to show that, for any $\xi_{1},\xi_{2}\in \R^{n}\setminus\{0\}$,
	\begin{equation}
	\label{tesirem}
		H(\xi_{1}+\xi_{2}) \le H(\xi_{1}) + H(\xi_{2}).
	\end{equation}
	By the convexity of the level sets, we have
	\begin{multline*}
		H\!\left(\frac{\xi_{1}}{H(\xi_{1})  + H(\xi_{2})} + 
	\frac{\xi_{2}}{H(\xi_{1}) \! +\! H(\xi_{2})}
	\right)=\\ =
		H\left(\frac{H(\xi_{1})}{H(\xi_{1}) + H(\xi_{2})} \frac{\xi_{1}}{H(\xi_{1})} + 
	\frac{H(\xi_{2})}{H(\xi_{1})+ H(\xi_{2})} \frac{\xi_{2}}{H(\xi_{2})}
	\right) \le 1,
	\end{multline*}
	and by \eqref{eq:omo} we get \eqref{tesirem}.
\end{rem}

We define the polar function $H^o\colon \R^N\rightarrow [0,+\infty[$
of $H$ as
\[
H^o(v)=\sup_{\xi \ne 0} \frac{\xi\cdot v}{H(\xi)}.
\]
It is easy to verify that also $H^o$ is a convex function
which satisfies properties \eqref{eq:omo} and
\eqref{eq:lin}. Furthermore,
\[
H(v)=\sup_{\xi \ne 0} \frac{\xi \cdot v}{H^o(\xi)}.
\]
The set
\[
\mathcal W = \{  \xi \in \R^N \colon H^o(\xi)< 1\}.
\]
is the so-called Wulff shape centered at the origin. We put
$\kappa_N=|\mathcal W|$, and denote $\mathcal W_r=r\mathcal W$. 

In the following, we often make use of some well-known properties of
$H$ and $H^o$:  
\begin{gather*}
  H(\xi)=DH(\xi)\cdot \xi,\; H^o(\xi)=DH^o(\xi)\cdot \xi,
  \quad \forall \xi \in \R^N\setminus \{0\},
  \\
  H(D H^o(\xi))=H^o(D H(\xi))=1,\quad \forall \xi \in
  \R^N\setminus \{0\},
  \\
  H^o(\xi) D H(D H^o(\xi) ) = H(\xi) D H^o(D H(\xi) ) = \xi,\quad
  \forall \xi \in \R^N\setminus \{0\}.
\end{gather*}

Let $\Omega$ be an open subset of $\mathbb R^N$. The total variation
of a function $u\in BV(\Omega)$ with respect to $H$ is (see \cite{ab}):
\[
\int_\Omega |Du|_H = \sup
\left\{
  \int_\Omega u\divergenza \sigma dx\colon
  \sigma \in C_0^1(\Omega;\R^N),\; H^o(\sigma)\le 1
\right\}.
\]
This yields the following definition of anisotropic perimeter of
$F\subset \R^N$ in $\Omega$:
\[
P_H(F;\Omega) = \int_\Omega |D\chi_F|_H= \sup
\left\{
  \int_F \divergenza \sigma dx\colon \sigma \in C_0^1(\Omega;\R^N),\;
  H^o(\sigma)\le 1
\right\}.
\]
The following co-area formula for the anisotropic perimeter
\begin{equation}\label{fr}
  \int_{\{u>t\}} H(Du) dx =
  \int_\Omega P_H (\{u>s\},\Omega)\, ds,\quad
  \forall u\in BV(\Omega)
\end{equation}
holds, moreover
\[
P_H(F;\Omega)= \int_{\Omega\cap \partial^*F} H(\nu_F) d\mathcal H^{N-1}
\]
where $\mathcal H^{N-1}$ is the $(N-1)-$dimensional Hausdorff measure
in $\mathcal R^N$, $\partial^*F$ is the reduced boundary of $F$ and
$\nu_F$ is the outer normal to $F$ (see \cite{ab}). 

The anisotropic perimeter of a set $F$ is  finite if and only if the
usual Euclidean perimeter 
\[
P(F;\Omega)=  \sup
\left\{
  \int_F \divergenza \sigma dx \colon
  \sigma \in C_0^1(\Omega;\R^N),\; |\sigma|\le 1
\right\}.
\]
is finite. Indeed, by properties \eqref{eq:omo} and \eqref{eq:lin} we
have that
\begin{equation}
\label{eq:lin2}
  \frac{1}{c_2} |\xi| \le H^o(\xi) \le \frac{1}{c_1} |\xi|,
\end{equation}
and then
\begin{equation*}
  c_1 P(E;\Omega) \le P_H(E;\Omega) \le c_2 P(E;\Omega).
\end{equation*}
A fundamental inequality for the anisotropic perimeter is the
isoperimetric inequality
\begin{equation}
  \label{isop}
  P_H(E;\R^N) \ge N \kappa_N^{\frac 1 N} |E|^{1-\frac 1 N},
\end{equation}
which holds for any measurable subset $E$ of $\R^N$ (see \cite{bu,
dpf,fomu,aflt}. See also \cite{dpg1} for some questions related to an anisotropic relative isoperimetric inequality).

We recall that if $u\in W^{1,1}(\Omega)$, then (see \cite{ab})
\[
\int_{\Omega} |Du|_H =\int_\Omega H(Du) dx.
\]

\subsection{Rearrangements and convex symmetrization}
We recall some basic definition on rearrangements.
Let $\Omega$ be an bounded open set of $\R^N$, 
$u:\Omega\rightarrow\R$ be a measurable function, and denote with
$|\Omega|$ the Lebesgue measure of $\Omega$.

The {distribution function} of $u$ is the map
$\mu_u:\mathbb R \rightarrow[0,\infty[$ defined by
\begin{equation*}
  \mu_u(t)\,=\,|\{x\in\Omega:|u(x)|>t\}|.
\end{equation*}
Such function is decreasing and right continuous.

The {decreasing rearrangement} of $u$ is the map
$u^*:\,[0,\infty[\rightarrow \R$ defined by
\begin{equation*}
  u^*(s):=\sup\{t\in\R:\mu_u(t)>s\}.
\end{equation*}
The function $u^*$ is the generalized inverse of $\mu_u$.

Following \cite{aflt}, the convex symmetrization of $u$ is the
function $u^\star(x)$, $x\in \Omega^\star$ defined by:
\begin{equation*}
  u^\star(x)=u^*(\kappa_N H^o(x)^N),
\end{equation*}
where $\Omega^\star$ is a set homothetic to the Wulff shape centered
at the origin having the same measure of $\Omega$, that is,
$\Omega^\star=\mathcal W_R$, with $R=\big(\frac{|\Omega|}
{\kappa_N}\big)^{1/N}$. 
  
We will say that any $w(x)$, $x\in \Omega^{\star}$ is an anisotropic radial function 
if for any $x\in \Omega^{\star}$, $w(x)=\tilde w(H^{o}(x))$, 
for some function $\tilde w(r)$, $r\in [0,R]$. For the sake of brevity, we will refer to such functions as radial functions. For example, $u^{\star}$ is radial.

The following results will be useful in the sequel. First, a basic
tool will be the Hardy inequality, stated below.
\begin{prop} For any $u\in W^{1,\gamma}(\R^N)$, $1<\gamma<N$,
  \begin{equation}
    \label{Hardy}
    \int_{\R^N} H(Du)^\gamma dx \ge \Lambda_\gamma \int_{\R^N}
    \frac{|u|^\gamma}{H^o(x)^\gamma} dx,
  \end{equation}
  and the constant $\Lambda_\gamma = \left(\frac{N-\gamma}{\gamma}\right)^\gamma$ is
  optimal, and it is not achieved.
\end{prop}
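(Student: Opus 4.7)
The strategy is a two-step reduction: first symmetrize to pass from arbitrary $u\in W^{1,\gamma}(\R^N)$ to an anisotropic radial function, then change variables to reduce the inequality to the classical one-dimensional Hardy inequality, whose sharp constant is known.

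\textbf{Step 1 (convex symmetrization).} Let $u^\star$ denote the convex symmetrization introduced above. Two ingredients are needed. First, the anisotropic Pólya--Szeg\H{o} principle proved in \cite{aflt}, which asserts
\[
\int_{\R^N} H(Du)^\gamma\,dx \;\ge\; \int_{\R^N} H(Du^\star)^\gamma\,dx.
\]
Second, since $H^o(x)^{-\gamma}$ is itself a radial (in the anisotropic sense) and radially decreasing function, the Hardy--Littlewood-type rearrangement inequality for convex symmetrization gives
\[
\int_{\R^N} \frac{|u|^\gamma}{H^o(x)^\gamma}\,dx \;\le\; \int_{\R^N} \frac{|u^\star|^\gamma}{H^o(x)^\gamma}\,dx.
\]
Thus it suffices to prove \eqref{Hardy} for anisotropic radial functions $u(x)=\tilde u(H^o(x))$ with $\tilde u\colon[0,+\infty)\to\R$ absolutely continuous, compactly supported, and such that $\tilde u(+\infty)=0$ (density arguments being standard).

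\textbf{Step 2 (reduction to 1D).} For $u(x)=\tilde u(H^o(x))$ one has $Du(x)=\tilde u'(H^o(x))\,DH^o(x)$ almost everywhere, and by $1$-homogeneity $H(DH^o(x))=1$ (one of the properties listed in Section 2), hence $H(Du(x))=|\tilde u'(H^o(x))|$. Using the co-area formula \eqref{fr} together with $P_H(\{H^o<r\};\R^N)=N\kappa_N r^{N-1}$ (which follows from the anisotropic isoperimetric equality on Wulff shapes), both integrals in \eqref{Hardy} become
\[
\int_{\R^N} H(Du)^\gamma\,dx = N\kappa_N\int_0^{+\infty} |\tilde u'(r)|^\gamma r^{N-1}\,dr, \qquad \int_{\R^N}\frac{|u|^\gamma}{H^o(x)^\gamma}\,dx = N\kappa_N\int_0^{+\infty} |\tilde u(r)|^\gamma r^{N-1-\gamma}\,dr.
\]
The desired inequality therefore reduces to the one-dimensional weighted Hardy inequality
\[
\int_0^{+\infty} |\tilde u'(r)|^\gamma r^{N-1}\,dr \;\ge\; \Bigl(\tfrac{N-\gamma}{\gamma}\Bigr)^\gamma \int_0^{+\infty} |\tilde u(r)|^\gamma r^{N-1-\gamma}\,dr,
\]
which is the classical Hardy inequality for radial functions in $\R^N$; its constant $\Lambda_\gamma=\bigl((N-\gamma)/\gamma\bigr)^\gamma$ is sharp and not attained (standard proof by writing $|\tilde u(r)|^\gamma=\bigl|\int_r^{+\infty}\tilde u'(s)\,ds\bigr|^\gamma$, integrating by parts in the right-hand side after the substitution $t=r^{N-\gamma}$, and applying H\"older's inequality).

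\textbf{Step 3 (sharpness and non-attainment).} Optimality of $\Lambda_\gamma$ in \eqref{Hardy} follows from optimality in the radial 1D case: test functions of the form $u_\varepsilon(x)=\varphi(H^o(x))$ with $\varphi$ a standard truncation of $r^{-(N-\gamma)/\gamma}$ yield ratios converging to $\Lambda_\gamma$, because for such functions both sides of \eqref{Hardy} equal the corresponding one-dimensional integrals up to the same factor $N\kappa_N$. Non-attainment is inherited from the one-dimensional case: any extremal would have to be anisotropic radial (both symmetrization inequalities of Step 1 become equalities only on such functions), and in 1D the extremal $r^{-(N-\gamma)/\gamma}$ does not belong to $W^{1,\gamma}$.

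\textbf{Main obstacle.} The nontrivial analytic input is the anisotropic Pólya--Szeg\H{o} principle in Step 1; once this is granted (as it is, by \cite{aflt}), the rest of the argument is a clean change of variables reducing \eqref{Hardy} to a scalar Hardy inequality. Care is only needed in handling the density/approximation step so that the radial reduction is legitimate for general $u\in W^{1,\gamma}(\R^N)$.
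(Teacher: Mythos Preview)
The paper does not actually prove this proposition; immediately after the statement it simply remarks that for the Euclidean norm this is the classical Hardy inequality and that for general $H$ the result is proved in \cite{vs}. So there is no in-paper argument to compare against.

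Your proposal, by contrast, supplies a complete self-contained proof. The approach is sound: the anisotropic P\'olya--Szeg\H{o} principle from \cite{aflt} together with the Hardy--Littlewood inequality reduce matters to anisotropic radial functions, and then the identity $H(DH^o)=1$ collapses everything to the classical one-dimensional weighted Hardy inequality with weight $r^{N-1}$, whose sharp constant $\big(\tfrac{N-\gamma}{\gamma}\big)^\gamma$ is well known and not attained. One small clean-up in Step~3: you do not actually need the equality cases of the symmetrization inequalities to deduce non-attainment. If some $u$ were an extremal, then by Step~1 its convex symmetrization $u^\star$ would also be an extremal (the left side can only decrease and the right side can only increase under passage to $u^\star$), and hence the one-dimensional inequality would be attained, which it is not. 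This avoids the somewhat delicate characterization of equality in P\'olya--Szeg\H{o}.
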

If $H(x)=\vert x\vert$, \eqref{Hardy} is the classical Hardy
inequality. For a general $H$, \eqref{Hardy} is proved in \cite{vs}.

Finally, we recall the definition of Marcinkiewicz spaces.  We say that a measurable function $u\colon \Omega\rightarrow \mathbb R$ belongs to $M^{r}(\Omega)$, $r>1$, if there exists a constant $C$ such that
\[
	\mu_{u}(t) \le {C}{t^{-r}},\quad\forall t>0,
\]
or, equivalently,
\[
u^{*}(s) \le {C}{\sigma^{-\frac{1}{r}}},	\quad\forall \sigma\in]0,|\Omega|]. 
\]
Then, we denote
\[
\|u\|_{M^{r}(\Omega)}=\sup_{\sigma \in ]0,|\Omega|[} u^{*}(\sigma)\sigma^{\frac 1 r}.
\] 
\subsection{Statement of the problem and main results}
Our aim is to prove a priori estimates and existence results for
problems of the type
\begin{equation}
  \label{eq:gen}
  \left\{
    \begin{array}{ll}
      -\divergenza{\left(a(x,u,Du)\right)}=b(x,u,Du)+ f(x) &
      \text{in }\Omega, \\
      u = 0 & \text{on }\de\Omega,
    \end{array}
  \right.
\end{equation}
where $a\colon \Omega\times\R\times \mathbb R^N\rightarrow \mathbb
R^N$ is a Carath\'eodory functions verifying 
\begin{equation}
  \label{ellipt}
  a(x,s,\xi) \cdot \xi \ge H(\xi)^p,
\end{equation}
and
\begin{equation}
  \label{growth}
  |a(x,s,\xi)|\le \alpha (|\xi|^{p-1}+|s|^{p-1}+k(x)), 
\end{equation}
for a.e. $x\in\Omega$, for any  $(s,\xi)\in\R\times\R^N$, where
$\alpha>0$, $k\in L_+^{p'}(\Omega)$, and $1<p<N$. 
Moreover,
\begin{equation}
  \label{ip-mon}
  (a(x,s,\xi) - a(x,s,\xi')) \cdot (\xi-\xi')>0,
\end{equation}
for a.e. $x\in\Omega$, for all $s\in \R,\xi\neq\xi'\in \R^N$.
As regards the lower order terms, we suppose that $b\colon
\Omega\times\R\times \mathbb R^N\rightarrow \mathbb R$ is a
Carath\'eodory functions such that
\begin{equation}
  \label{b=1}
  |b(x,s,\xi)|\le H(\xi)^q
\end{equation}
for a.e. $x\in\Omega$, for any  $(s,\xi)\in\R\times\R^N$, for
$p-1<q\le p$.

Finally, we take $f$ such that 
  \begin{equation}
  \label{ipf}
  f^\star(x) \le \frac{\lambda}{H^o(x)^\gamma},\quad x\in
  \Omega^\star,\quad \text{ with }\gamma=
  \left(\frac{q}{p-1}\right)'=\frac{q}{q-(p-1)}. 
  \end{equation}
  
 We observe that such hypothesis implies that $f$
 belongs to the Marcinkiewicz space $M^{\frac{N}{\gamma}}(\Omega)$. It is
 worth to recall that $M^{\frac{N}{\gamma}}(\Omega)\subset L^{s}(\Omega)$ for
 any $s<\frac{N}{\gamma}$, but $M^{\frac{N}{\gamma}}(\Omega)\supset
 L^{\frac{N}{\gamma}}(\Omega)$.

 Assume first that 
 \begin{equation}
 	\label{q1}
	p\ge q>p-1+\frac{p}{N}.
 \end{equation} Then \eqref{ipf} implies that $f\in L^{(p^*)'}(\Omega)$, where $p^*=\frac{Np}{N-p}$
  is the Sobolev conjugate of $p$. Indeed in this case, ${p} \le \gamma < \frac{Np-N+p}{p}$,
that is $\frac N \gamma> (p^*)'$.
Hence, if \eqref{q1} holds, we say that $u\in W^{1,p}_0(\Omega)$
 is a weak solution of \eqref{eq:gen} if
 \begin{equation}\label{defsol}
  \int_\Omega a(x,u,Du)\cdot D\varphi\, dx = \int_\Omega [b(x,u,Du)
  +f]\varphi\, dx, 
\end{equation}
for any $\varphi\in W^{1,p}_0(\Omega)\cap L^{\infty}(\Omega)$. 

Second, suppose that
\begin{equation}
 	\label{q2}
 p-1+\frac{p}{N}\ge q > \frac{N}{N-1}(p-1).
\end{equation}
Then \eqref{ipf} gives that $f\in L^s(\Omega)$, with $1<s<\frac N \gamma$. Hence, if \eqref{q2} holds,
we say that $u$ is a distributional solution of \eqref{eq:gen} if $u\in
W^{1,q}_0(\Omega)$ and \eqref{defsol} is satisfied for any $\varphi\in C^{\infty}_0(\Omega)$.

Finally, let us observe that if $q=\frac{N}{N-1}(p-1)$,
then $\frac N \gamma=1$, and $f$ is not in $L^1(\Omega)$. 

The main results of our paper will be the following.
\begin{theo}
  \label{thm:ex}
Suppose that the assumptions \eqref{ellipt}--\eqref{b=1} hold. Moreover, let
  $f\in M^{\frac N \gamma}(\Omega)$ such that  
  \begin{equation}
  	\label{condf}
  f^\star(x) \le \frac{\lambda}{H^o(x)^\gamma},\quad x\in
  \Omega^\star,\text{ for some }0\le \lambda< c_\gamma\Lambda_\gamma,
  \end{equation}
  with $c_\gamma=(\gamma-1)^{\gamma-1}$, and
  $\Lambda_\gamma=\left(\frac{N-\gamma}{\gamma}\right)^\gamma$.
  Then,
  \begin{itemize}
  \item[(a)] if $p\ge q >p-1+\frac p N$, then problem
    \eqref{eq:gen} admits a weak solution $u\in W_0^{1,p}(\Omega)\cap
    L^s(\Omega)$, with $s<+\infty$ if $p=q$, or
    $s<\frac{N[q-(p-1)]}{p-q}$ otherwise. 
  \item[(b)] if $p-1+\frac p N \ge q >\frac{N}{N-1}(p-1)$, then problem
    \eqref{eq:gen} admits a distributional solution $u\in
    W_0^{1,r}(\Omega)$, with $r<N[q-(p-1)]$. 
  \end{itemize}
  \end{theo}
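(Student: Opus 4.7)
The plan is to deduce Theorem~\ref{thm:ex} by an approximation/compactness scheme whose backbone is the sharp symmetrization comparison that will be proved in Section~4 together with the explicit radial solution constructed in Section~3. To begin, I would introduce approximating problems with bounded data by setting
\[
  b_n(x,s,\xi) = \frac{b(x,s,\xi)}{1 + n^{-1}|b(x,s,\xi)|}, \qquad f_n = T_n(f),
\]
so that $|b_n|\le \min\{n,H(\xi)^q\}$, $f_n\in L^\infty(\Omega)$, $f_n\to f$ in $M^{N/\gamma}(\Omega)$, and $f_n^\star$ still satisfies \eqref{condf}. Standard Leray--Lions theory, using the pseudomonotonicity granted by \eqref{ellipt}--\eqref{ip-mon} and the boundedness of $b_n$, then yields a weak solution $u_n\in W_0^{1,p}(\Omega)\cap L^\infty(\Omega)$ of the truncated problem.

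Next, I would derive uniform a priori bounds through the symmetrization comparison (the main output of Section~4), which should give $u_n^\star(x)\le v(x)$ in $\Omega^\star$, where $v$ is the radial solution of \eqref{intro:pbrad}. The strict inequality $\lambda < c_\gamma\Lambda_\gamma$ is precisely what allows Section~3 to produce $v$ in the restricted class of radial solutions with Wulff-shape level sets: the change of variable $V=\varphi(u)$ turns \eqref{intro:pbrad} into \eqref{intro:pbradcambio}, whose explicit radial solution is available. Computing the growth of $v$ near the origin then furnishes the Lebesgue or Lorentz bound on $\{u_n\}$ stated in (a) and (b).

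To derive gradient bounds, I would test the approximating equation with $u_n$ itself in case~(a), or with power-type functions such as $((1+|u_n|)^{\alpha}-1)\sign(u_n)$, $\alpha<1$, in case~(b). Using \eqref{ellipt}, \eqref{b=1} and the Hardy inequality \eqref{Hardy} together with the $L^s$ bound from the previous step, the gradient-dependent term $\int_\Omega H(Du_n)^q |u_n|\,dx$ can be absorbed into the principal part precisely because $\lambda$ lies below the sharp threshold $c_\gamma\Lambda_\gamma$. This delivers uniform bounds in $W_0^{1,p}(\Omega)$ in case~(a), and in $W_0^{1,r}(\Omega)$ for any $r<N[q-(p-1)]$ in case~(b), via a Boccardo--Gallou\"et-type iteration adapted to the anisotropic setting.

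Finally, I would pass to the limit. Along a subsequence, $u_n\rightharpoonup u$ weakly in the relevant Sobolev space and $u_n\to u$ almost everywhere. The delicate point is the identification of $b_n(x,u_n,Du_n)$ in the limit, which requires almost-everywhere convergence of the gradients. I would run the Boccardo--Murat argument: testing the equations with $T_k(u_n-u)$ and invoking the strict monotonicity \eqref{ip-mon} yields strong convergence of $DT_k(u_n)$ in $L^p$; a diagonal procedure then produces $Du_n\to Du$ a.e., and Vitali's theorem together with the equi-integrability granted by the a priori estimates ensures that $H(Du_n)^q\to H(Du)^q$ in $L^1(\Omega)$ (or in $L^{p'}(\Omega)$ in case~(a)), so one may pass to the limit in \eqref{defsol}. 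The main obstacle is exactly this equi-integrability of $H(Du_n)^q$, especially in case~(b) where $u_n$ is unbounded and $f$ only belongs to the Marcinkiewicz class: it is the strict smallness $\lambda<c_\gamma\Lambda_\gamma$, coupled with the sharpness of the symmetrization comparison, that controls the growth of the radial majorant $v$ at the singular point and delivers the uniform integrability needed to close the argument.
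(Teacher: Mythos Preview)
Your overall scheme---truncate, apply Leray--Lions, obtain the pointwise comparison $u_n^{\star}\le v$ from the symmetrization machinery of Section~4, then pass to the limit via Boccardo--Murat type arguments---matches the paper's structure. The paper's actual proof of Theorem~\ref{thm:ex} is just two lines, deferring all the work to Theorem~\ref{theo:comp} and Proposition~\ref{propest}.

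The one genuine methodological divergence is in how you derive the \emph{gradient} estimates. You propose to test the approximating equation with $u_n$ (case~(a)) or with $((1+|u_n|)^{\alpha}-1)\sign u_n$ (case~(b)) and absorb $\int H(Du_n)^q|u_n|$ using Young, Hardy and the $L^s$ bound already secured from the comparison. The paper does \emph{not} do this: it extracts the gradient bounds directly from the differential inequality produced by the symmetrization argument (inequality~\eqref{eq:ineq}), integrates it, and plugs in the explicit profile of $v$ to obtain \eqref{eq:fin}, from which both the $W^{1,p}_0$ bound (case~(a)) and the $W^{1,r}_0$ bound (case~(b)) follow by elementary one-dimensional integration. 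This is cleaner and avoids a second round of testing.

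Your route is workable but, as written, has a soft spot at $q=p$: testing with $u_n$ gives $\int H(Du_n)^p \le \int H(Du_n)^p|u_n| + \int|f_n||u_n|$, and the first term on the right cannot be absorbed merely from an $L^s$ bound on $u_n$---you would need an exponential-type test function instead. The appeal to the Hardy inequality at that point is also misplaced: Hardy is used in the paper inside the symmetrization comparison (and in Section~3), not to control $\int H(Du_n)^q|u_n|$. The smallness $\lambda<c_\gamma\Lambda_\gamma$ enters only through the existence and integrability of the radial majorant $v$; once the comparison $u_n^{\star}\le v$ is in hand, no further smallness is invoked. In short, your strategy is correct in spirit, but the paper's approach of reading the gradient estimates off the symmetrization inequality is both sharper and avoids the ad hoc test-function gymnastics you sketch.
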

 \begin{theo}
 \label{thm:comp}
  Under the assumptions of Theorem \ref{thm:ex}, the obtained solution $u$ 
  verifies 
  \[
  u^*(s) \le v^*(s),\quad s\in]0,|\Omega|],
  \]
  where $v\in W_{0}^{1,s}(\Omega^{\star})$, $s<N(q-(p-1))$ is the radial solution of problem
  \[
  \left\{
    \begin{array}{ll}
      -\mathcal Q_p v= H(Dv)^q + \dfrac{\lambda}{H^o(x)^\gamma} &
      \text{in }\Omega^{\star}, \\[.2cm]  
      v = 0 & \text{on }\de \Omega^{\star},
    \end{array}
  \right.
  \]
  given in Theorem \ref{unic2} (See Section 3).
\end{theo}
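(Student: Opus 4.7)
The plan follows the classical Talenti symmetrization scheme, adapted to a gradient-dependent lower-order term. I would apply it first to the approximating problems used to construct $u$ in Theorem \ref{thm:ex}: let $u_n$ denote the regularized solutions (with bounded data and truncated $b$), for which Lipschitz truncations are admissible test functions. It suffices to prove the comparison $u_n^\ast\le v^\ast$ uniformly in $n$ and then pass to the limit, since, in the construction of Theorem \ref{thm:ex}, $u_n\to u$ in measure and hence $u_n^\ast\to u^\ast$ pointwise.

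Concretely, I would test the weak formulation against $\varphi_{t,h}=\frac{1}{h}T_{h}\bigl((|u_n|-t)^{+}\bigr)\sign(u_n)$, with $T_h$ the truncation at height $h$, and send $h\to 0^{+}$. The ellipticity \eqref{ellipt}, the growth \eqref{b=1}, and the Hardy--Littlewood inequality applied to $f_n$ via \eqref{condf} give, for a.e.\ $t>0$,
\[
-\frac{d}{dt}\int_{\{|u_n|>t\}}H(Du_n)^{p}\,dx \le \int_{\{|u_n|>t\}}H(Du_n)^{q}\,dx + C(\lambda,N,\gamma)\,\mu_n(t)^{1-\gamma/N}.
\]
Independently, coupling the co-area formula \eqref{fr}, Hölder's inequality, and the anisotropic isoperimetric inequality \eqref{isop} yields the standard Talenti bound
\[
N^{p}\kappa_N^{p/N}\mu_n(t)^{p(1-1/N)} \le \bigl(-\mu_n'(t)\bigr)^{p-1}\left(-\frac{d}{dt}\int_{\{|u_n|>t\}}H(Du_n)^{p}\,dx\right).
\]

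The delicate part is the quasilinear term $\int_{\{|u_n|>t\}}H(Du_n)^{q}\,dx$: Hölder's inequality (exploiting $q\le p$) together with the co-area formula rewrite it in terms of $u_n^\ast$ and $(u_n^\ast)'$, after which the anisotropic Hardy inequality \eqref{Hardy} lets one absorb it back into the energy. It is precisely the assumption $\lambda<c_\gamma\Lambda_\gamma$, with $c_\gamma=(\gamma-1)^{\gamma-1}$ and $\Lambda_\gamma=((N-\gamma)/\gamma)^{\gamma}$, that renders this absorption quantitatively possible and guarantees structural compatibility with the explicit radial profile constructed in Theorem \ref{unic2}. Performing the change of variable $s=\mu_n(t)$ produces a closed first-order differential inequality for $u_n^\ast$; by construction the radial solution $v=v^\ast$ satisfies the analogous identity with equality, and the substitution $V=\varphi(u)$ suggested in the introduction is the natural bookkeeping for a Gronwall-type comparison yielding $u_n^\ast\le v^\ast$ on $]0,|\Omega|]$. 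Letting $n\to\infty$ completes the proof. The main obstacle, throughout, is this reshaping of the gradient term so as to match the radial ODE; without the smallness of $\lambda$ the absorption step would fail.
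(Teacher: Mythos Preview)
Your overall architecture is correct and matches the paper exactly: work with the approximating solutions, test with the truncations $\varphi_{t,h}$, derive a level-set differential inequality, compare with the explicit radial $v$, and pass to the limit. The first displayed inequality and the Talenti bound you wrote are both right.

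The gap is in your treatment of the gradient term $\int_{\{|u_n|>t\}}H(Du_n)^{q}\,dx$. You propose to ``absorb it back into the energy'' via the Hardy inequality \eqref{Hardy}, with the smallness $\lambda<c_\gamma\Lambda_\gamma$ providing the quantitative room. That is not how the argument runs, and it is not clear how it could: Hardy controls $\int|u|^\gamma/H^o(x)^\gamma$ by $\int H(Du)^\gamma$, which does not connect $\int_{\{|u_n|>t\}}H(Du_n)^{q}$ to the quantity $-\frac{d}{dt}\int_{\{|u_n|>t\}}H(Du_n)^{p}$ appearing on the left. What the paper does instead is rewrite, via H\"older and the isoperimetric inequality,
\[
\int_{\{|u_n|>t\}}H(Du_n)^{q}\,dx \le \frac{1}{(N\kappa_N^{1/N})^{p-q}}\int_t^{+\infty}\Bigl(-\tfrac{d}{d\tau}\!\int_{\{|u_n|>\tau\}}\!H(Du_n)^{p}\Bigr)\Bigl(\tfrac{-\mu'(\tau)}{\mu(\tau)^{1-1/N}}\Bigr)^{p-q}d\tau,
\]
and then apply \emph{Gronwall's lemma} to the resulting integral inequality for $t\mapsto -\frac{d}{dt}\int_{\{|u_n|>t\}}H(Du_n)^{p}$. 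This produces a closed inequality for $(-u_n^{*})'$ involving an exponential weight; the radial $v$ satisfies the same relation with equality, and the ODE comparison (following \cite{fermes}) then gives $(-u_n^{*})'\le(-v^{*})'$.

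Correspondingly, the role of $\lambda<c_\gamma\Lambda_\gamma$ is not an absorption constant in the symmetrization step. It enters only through Theorem \ref{unic2}: it guarantees that the comparison function $v$ exists and that the associated $V$ in \eqref{eq:expip} satisfies the integrability \eqref{eq:expip1}, which is exactly what the ODE comparison needs. The Hardy inequality is used in Section 3 (uniqueness of the transformed radial problem), not in the level-set argument itself.
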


\begin{rem}
 We explicitly observe that requiring the condition \eqref{condf} on $f$ in the above Theorems is equivalent to assume that
 \[
 \|f\|_{M^{\frac N \gamma}} < \kappa_{N}^{\frac\gamma N}(\gamma-1)^{\gamma-1}\left( \frac{N-\gamma}{\gamma}\right)^{\gamma}.
 \]
\end{rem}

\section{The radial case}
We first study the problem
\begin{equation}\label{eq:rad1}
  \left\{
    \begin{array}{ll}
      -\mathcal Q_p v= H(Dv)^q + \dfrac{\lambda}{H^o(x)^\gamma} &
      \text{in }\W_R, \\[.2cm]  
      v = 0 & \text{on }\de \W_R,
    \end{array}
  \right.
\end{equation}
where $\lambda\ge 0$, $p-1<q\le p$, $\mathcal W_R$ is
the Wulff shape centered at the origin and radius $R$, and $\gamma =
\left(\frac{q}{p-1}\right)'=\frac{q}{q-(p-1)}$. 
 
In order to prove an existence and uniqueness result for problem \eqref{eq:rad1},
we first study the following problem:
\begin{equation}\label{eq:rad1cv}
  \left\{
    \begin{array}{ll}
      -\mathcal Q_\gamma V= \dfrac{\lambda
        }{c_\gamma H^o(x)^\gamma}(V+1)^{\gamma-1} & 
    \text{in } \W_R, \\[.2cm]  
      V= 0 & \text{on }\de \W_R,
    \end{array}
  \right.
\end{equation}
with $c_\gamma =  (\gamma - 1)^{\gamma-1}$. 
\begin{rem}
\label{rembeta}
If we look for radial solutions $V(r)=V(H^{o}(x))$ 
of \eqref{eq:rad1cv}, these solves the equation 
  \begin{equation}\label{eq1dim}
  -|V'|^{\gamma-2}\left( (\gamma-1)V''+\dfrac{N-1}{r} V' \right)=
  \frac{\lambda}{c_\gamma}\frac{(V+1)^{\gamma-1}}{r^\gamma} \quad
  \text{in }]0,R[,
  \end{equation}
 which follows from the equation in \eqref{eq:rad1cv}, plugging in the function $V(r)=V(H^o(x))$ and using the properties of $H$. It is a straightforward computation to show that
  $\Phi (r)=\left(\frac{R}{r}\right)^{\beta}-1$ solves \eqref{eq1dim} if
  and only if $\beta$ is such that  
  	\begin{equation}
	\label{betaeq}
		-(\gamma-1)\, \beta^\gamma + (N-\gamma)\, \beta^{\gamma-1}=\frac{\lambda}{c_\gamma}.
	\end{equation}
  For $0\le \lambda < c_\gamma  \Lambda_{\gamma}$, this equation has exactly
  two different solutions, but there exists a unique solution
  $\beta$ such that 
  \begin{equation}
  \label{betagiusto}
  \beta\in\left[0,\frac{N-\gamma}{\gamma}\right[\quad\text{and}\quad 
  	\Phi(x)=\left(\frac{R}{H^{o}(x)}\right)^{\beta}-1 \in W_0^{1,\gamma}(\W_R)
  \end{equation}    
  (see Figure \ref{fig1}).
\end{rem}
  \begin{figure}[h]
\definecolor{ttqqtt}{rgb}{0.2,0,0.2}
\begin{tikzpicture}[line cap=round,line join=round,>=triangle
  45,x=1.0cm,y=1.0cm,scale=10,>=stealth] 
\draw[->,color=black] (-0.04,0) -- (1.11,0);
\draw[->,color=black] (0,-0.05) -- (0,0.38);
\foreach \y in {0.297}
\draw[shift={(0,\y)},color=black] (.2pt,0pt) -- (-.2pt,0pt);
\draw (0,.297) node [anchor=east]
{\scriptsize$\Lambda_\gamma$};
\draw[dotted] (0,.297) -- (.67,.297);
\clip(-0.1,-0.07) rectangle (1.12,0.381);
\draw[smooth,samples=200,domain=0.0:1.0] plot(\x,{0-2*(\x)^3+2*(\x)^2});
\draw (.67,-.2pt) -- (.67,.2pt) node [anchor=north] {\scriptsize
  $\frac{N-\gamma}{\gamma}$};
\draw[dotted] (.67,.292) -- (.67,0);
\draw (1,0) node [anchor=north] {\scriptsize
  $\frac{N-\gamma}{\gamma-1}$}; 
\draw (1.11,0) node [anchor=south] {\scriptsize $\beta$};
\draw (0,0.36) node [anchor=west] {\scriptsize $F(\beta)$};
\draw[dotted] (-.2pt,.17) node [anchor=east]{\scriptsize $\frac{\lambda}
  {c_\gamma}$} -- (.89,.17);
\draw[dotted] (.366,0) node [anchor=north]{\scriptsize $\beta$}
-- (.366,.17);
\end{tikzpicture}
\caption{$F(\beta)= -(\gamma-1) \beta^\gamma + (N-\gamma)
  \beta^{\gamma-1}$. For any $ \lambda \in [0, c_\gamma
  \Lambda_\gamma[$, there exists a unique $\beta\ge 0$ such that 
  $F(\beta)=\frac{\lambda}{c_\gamma}$ and $r^{-\beta}\in
 W^{1,\gamma}(\W_R) $.}
\label{fig1}
\end{figure}
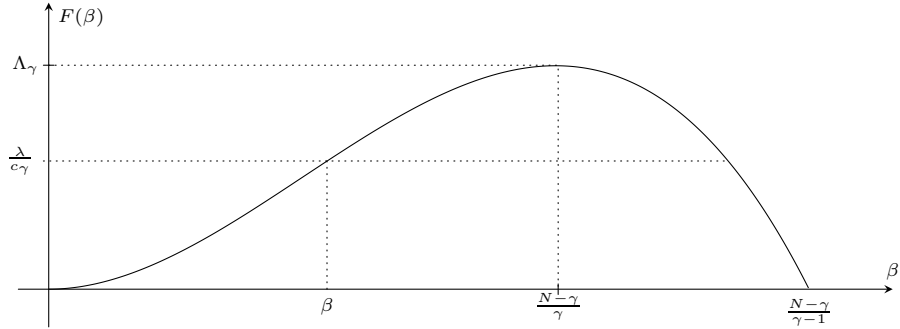

The following result holds:
\begin{theo}
  \label{unic} Let $1<\gamma<N$, and 
  \[
  0\le \lambda <c_\gamma \Lambda_\gamma,
  \]
 where $\Lambda_\gamma=\left(\frac{N-\gamma}{\gamma}\right)^{\gamma}$
 is the best constant of the Hardy inequality \eqref{Hardy}. Then, if $\lambda>0$, the 
 problem \eqref{eq:rad1cv} admits a unique positive solution $\Phi \in
 W_0^{1,\gamma}(\W_R)$, in the sense that 
 \begin{equation} \label{sol-gamma}
 \int_{\W_R} H(D\Phi)^{\gamma-1}H_\xi(D\Phi) \cdot D\varphi\, dx =
 \frac{\lambda}{c_\gamma}
   \int_{\W_R}\frac{1}{H^o(x)^{\gamma}}(\Phi +1)^{\gamma-1}\varphi\,dx,\quad
   \varphi \in W_0^{1,\gamma}(\W_R),
 \end{equation}
where $\Phi$ is given in \eqref{betagiusto}. 
 Moreover, if $\lambda=0$ the unique solution in $W_{0}^{1,\gamma}(\Omega)$ to \eqref{eq:rad1cv} in the sense of \eqref{sol-gamma} is $\Phi\equiv 0$.
   \end{theo}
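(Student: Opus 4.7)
The proof splits into existence, uniqueness for $\lambda>0$, and the trivial case $\lambda=0$.

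For existence I would verify that the explicit candidate $\Phi(x)=\bigl(R/H^o(x)\bigr)^{\beta}-1$ produced in Remark~\ref{rembeta} belongs to $W_0^{1,\gamma}(\W_R)$ and satisfies \eqref{sol-gamma}. Using the chain rule together with the identity $H(DH^o(\xi))=1$ recorded in Section~2, a direct computation gives
\[
H(D\Phi(x))=\beta\,R^{\beta}H^o(x)^{-\beta-1},
\]
and the coarea formula \eqref{fr} reduces $\int_{\W_R}H(D\Phi)^{\gamma}\,dx$ to an explicit one-dimensional integral, convergent precisely when $(\beta+1)\gamma<N$, i.e.\ $\beta<\tfrac{N-\gamma}{\gamma}$; this is guaranteed by \eqref{betagiusto}. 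The ODE computation of Remark~\ref{rembeta} shows that $\Phi$ solves the equation in the strong sense on $\W_R\setminus\{0\}$. To pass to the weak formulation \eqref{sol-gamma} I would integrate by parts on $\W_R\setminus\W_\delta$ against an arbitrary $\varphi\in W_0^{1,\gamma}(\W_R)$ and let $\delta\to 0^+$, the surface integral on $\partial\W_\delta$ vanishing thanks to the same integrability threshold.

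For uniqueness when $\lambda>0$, the key observation is the strict monotonicity
\[
s\mapsto\frac{(s+1)^{\gamma-1}}{s^{\gamma-1}}=\left(\frac{s+1}{s}\right)^{\gamma-1}\quad\text{is strictly decreasing on }(0,+\infty),
\]
which places problem \eqref{eq:rad1cv} within the scope of the D\'iaz--Saa principle. Given two positive solutions $V_1,V_2\in W_0^{1,\gamma}(\W_R)$, I would combine the weak formulations tested (after a truncation $\min(V_i,k)$) against $\tfrac{V_1^{\gamma}-V_2^{\gamma}}{V_1^{\gamma-1}}$ and $\tfrac{V_2^{\gamma}-V_1^{\gamma}}{V_2^{\gamma-1}}$ with the anisotropic Picone inequality
\[
H(D\psi)^{\gamma}+(\gamma-1)\Bigl(\tfrac{\psi}{\phi}\Bigr)^{\gamma}H(D\phi)^{\gamma}-\gamma\Bigl(\tfrac{\psi}{\phi}\Bigr)^{\gamma-1}H(D\phi)^{\gamma-1}H_{\xi}(D\phi)\cdot D\psi\geq 0,
\]
which follows from the convexity of $\xi\mapsto H(\xi)^{\gamma}$ together with the homogeneity identity $H_\xi(\xi)\cdot\xi=H(\xi)$. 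The left-hand side of the resulting identity is nonnegative by Picone, the right-hand side is nonpositive by the monotonicity above, and equality forces $V_1\equiv V_2$ a.e.

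The case $\lambda=0$ is immediate: the equation reduces to $-\mathcal Q_\gamma\Phi=0$ with zero trace, and testing against $\Phi$ itself yields $\int_{\W_R}H(D\Phi)^{\gamma}\,dx=0$, hence $\Phi\equiv 0$. The main obstacle will be the D\'iaz--Saa step: neither of the candidate test functions lies in $W_0^{1,\gamma}(\W_R)$ a priori, so admissibility requires a careful approximation (truncation of the $V_i$ and cut-off of the singular weight $H^o(x)^{-\gamma}$ near the origin), and the passage to the limit is made possible precisely by the subcritical gap $\lambda<c_\gamma\Lambda_\gamma$ through the Hardy inequality \eqref{Hardy}.
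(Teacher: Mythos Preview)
Your proposal is correct and essentially equivalent in spirit to the paper's argument, but the packaging is different. The paper does not run the D\'iaz--Saa test-function scheme; instead it observes that positive solutions of \eqref{eq:rad1cv} are critical points of the functional
\[
F(\psi)=\frac{1}{\gamma}\int_{\W_R}\Bigl[H(D\psi)^{\gamma}-\frac{\lambda}{c_{\gamma}H^{o}(x)^{\gamma}}\bigl((|\psi|+1)^{\gamma}-1\bigr)\sign\psi\Bigr]\,dx,
\]
and shows that $F$ is strictly convex in the variable $\psi^{\gamma}$ (hidden convexity \`a la Belloni--Kawohl). The gradient part is handled by setting $\phi=\bigl((U^{\gamma}+V^{\gamma})/2\bigr)^{1/\gamma}$ and using convexity and homogeneity of $H$ to get $H(D\phi)^{\gamma}\le\tfrac12\bigl(H(DU)^{\gamma}+H(DV)^{\gamma}\bigr)$; the zero-order part is handled by noting that $t\mapsto(t^{1/\gamma}+1)^{\gamma}$ is strictly concave. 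Your Picone inequality is precisely the infinitesimal form of this convexity statement, so the two proofs are the same idea at different levels: the paper works variationally, you work directly on the PDE. The variational route has the advantage that the admissibility issue you flag (truncation of $V_i$, cut-off near the origin, passage to the limit via Hardy) essentially disappears, since one only needs $\phi\in W_{0}^{1,\gamma}$ rather than the quotients $\tfrac{V_1^{\gamma}-V_2^{\gamma}}{V_i^{\gamma-1}}$; conversely, your route makes the role of the monotonicity of $s\mapsto(1+1/s)^{\gamma-1}$ more transparent. For $\lambda=0$ both proofs are immediate; the paper phrases it as strict convexity of $\xi\mapsto H(\xi)^{\gamma}$, which amounts to the same computation as your test with $\varphi=\Phi$.
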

\begin{proof}
By Remark \ref{rembeta}, we have to prove only the uniqueness issue. We first assume that $0<\lambda<c_{\gamma}\lambda_{\gamma}$. Reasoning as in \cite{bk02,bfk}, we prove that there are no other positive solutions in $W_{0}^{1,\gamma}(\Omega)$ of 
\eqref{eq:rad1cv}. As a matter of fact, the positive solutions of \eqref{eq:rad1cv} are stationary points of the functional
\begin{equation}
  \label{eq:funct}
  F(\psi)= \frac 1 \gamma \int_{\W_R} \left[H(D\psi)^\gamma - \frac{\lambda
    }{c_\gamma H^o(x)^\gamma} [(|\psi|+1)^{\gamma}-1]\sign \psi\right] dx,\quad \psi \in
  W_0^{1,\gamma}(\W_R).
\end{equation}
The functional $F(\psi)$ is even. Moreover, it is strictly convex in the variable $\psi^\gamma$. 
Indeed, if $U,V>0$, $U,V\in W_{0}^{1,\gamma}(\Omega)$, then 
the function
\[
\phi= \left(\frac{U ^{\gamma}+V^{\gamma}}{2}\right)^{1/\gamma}
\]
is an admissible test function for $F$ in \eqref{eq:funct}. Computing
$D\phi$, by the homogeneity of $H$ it follows that
\[
H(D\phi)= \phi\, H\left( \frac 1 2 \frac{U ^\gamma}{\phi^\gamma}
  \frac{DU }{U } + \frac 1 2 \frac{V^\gamma}{\phi^\gamma}
  \frac{DV}{V} \right).
\]
Let $s(x)=\frac{Z^\gamma}{2\phi^\gamma}$. Observing that $0<s<1$, by
convexity and homogenity of $H$ we have that
\begin{equation*}
\begin{array}{rl}
H(D\phi)^\gamma =& \phi^\gamma H\left(s(x) \dfrac{D U }{U }+
  (1-s(x))\dfrac{DV}{V} \right)^\gamma \\
\le & \phi^\gamma \left(s(x)H\left(\dfrac{DU }{U }\right)^\gamma
  +(1-s(x))H\left( \dfrac{DV}{V} \right)^\gamma\right) \\
=& \dfrac{U ^\gamma}{2} H\left(\dfrac{D U }{U }\right)^\gamma
 +\dfrac{V^\gamma}{2} H\left(\dfrac{DV}{V}\right)^\gamma\\[.3cm]
 =& \dfrac 1 2 \left[H(DU )^\gamma + H(DV)^\gamma\right].
\end{array}
\end{equation*}
On the other hand, the function $g(t)=(t^{1/\gamma}+1)^\gamma$, $t\ge 0$ is strictly concave, and then $F(\psi)$ is stricly convex in $\psi^{\gamma}$. Finally, $F$ admits only the positive critical point $\Phi$.

The theorem is completely proved if we show that, when $\lambda=0$, $\Phi=0$ is the unique solution in $W_{0}^{1,\gamma}$. This follows observing that, in this case, the functional $F$ becomes
\[
F(\psi)=\frac 1 \gamma \int_{\mathcal W_{R}}[H(D\psi)^{\gamma}]dx,
\] 
which is strictly convex, since $H^{\gamma}(\xi)$ is strictly convex in $\xi$.
\end{proof}

\begin{rem}
It is worth noting that the argument of Theorem \ref{unic} can be used, for example, 
also in order to obtain uniqueness for problems of the type
\begin{equation}\label{modradg}
\left\{
\begin{array}{ll}
-\mathcal Q_\gamma v= b(x) |v|^{\gamma-2}v + f(x) &\text{in }
\Omega,\\[.1cm]
v=0 &\text {on }\de \Omega,\\
\end{array}
\right.
\end{equation}
with $\Omega$ bounded open set of $\R^N$, $b$ such that 
\begin{equation}\label{eq:unigen}
b(x)\in L\left(\frac N
  \gamma,\infty\right),\text { with } (b^+)^\star(x)\le
\frac{\lambda}{H^o(x)^\gamma}\text { in }\Omega^\star,\;
0<\lambda<\Lambda_\gamma, 
\end{equation}
and $f\in L((\gamma^*)',\gamma')$, $f\ge 0$, $f\not\equiv 0$
in $\Omega$. Under this assumptions, problem \eqref{modradg} admits at
most a (positive) weak solution. Indeed, if $v$ is a solution to \eqref{modradg}, using the Polya-Szeg\"o inequality in the anisotropic case (see \cite{aflt}), and the Hardy-Littlewood inequality we get that
\[
\int_{\Omega^\star} H(D(v^-)^\star)^\gamma dx \le \int_\Omega H(Dv^-)^\gamma dx \le
 \int_\Omega b^+(v^-)^\gamma dx \le \int_{\Omega^\star}
(b^+)^\star[(v^-)^\star]^\gamma dx,
\]
Recalling the assumptions on $b$ in \eqref{eq:unigen}, the Hardy
inequality assures that $v^-\equiv 0$. Actually, by the maximum principle $v$ must be positive in $\Omega$. Hence we can proceed similarly as
in the proof of Theorem \ref{unic} obtaining the uniqueness of the
solution (see also \cite{diazsaa,bk02,dpg3}).
\end{rem}

\begin{theo}\label{unic2}
  Let $p\ge q>(p-1)\frac{N}{N-1}$, $\Lambda_\gamma=\left(\frac{N-\gamma}{\gamma}\right)^\gamma$, $c_\gamma=(\gamma-1)^{\gamma-1}$, $\gamma=\left(\frac{q}{p-1}\right)'$ and
  \begin{equation*}
    0\le \lambda < c_\gamma{\Lambda_\gamma}.
  \end{equation*}
 Then,  if $\lambda>0$ there exists a unique positive, radially decreasing,
  distributional solution  $v(x)=v(r)$ of \eqref{eq:rad1} in
  $W_0^{1,s}(\W_R)$, with $s<N(q-(p-1))=\tilde s$, such that, defining
  \begin{equation}
       \label{eq:expip}
       V(x)=\exp\left[ \frac{1}{\gamma-1}\int_{H^o(x)}^{R}
         (-v'(\tau))^{q-(p-1)} d\tau \right] - 1,
     \end{equation}
     it holds that
     \begin{equation}
       \label{eq:expip1}
      V\in W_0^{1,\gamma}(\W_R),\quad (V+1)^{\gamma-1}\in
      W^{1,\delta}(\W_R),\;
      \text{for some }\delta > \tilde\delta =
      \left(
        \frac{\tilde s}{p-1}
      \right)'.
    \end{equation}
     Moreover, if $q<p$,
     \[
     v(r)= \theta
     \left[
       r^{-\frac{p-q}{q-(p-1)}} -R^{-\frac{p-q}{q-(p-1)}}
     \right],
     \]
     with $\theta=[(\gamma-1)\beta]^{\frac{1}{q-(p-1)}}
     \frac{q-(p-1)}{p-q}$, while, for $q=p$, 
     \[
     v(r)=(p-1)\beta \log \frac{R}{r},
     \]
     where $\beta$ is the solution of \eqref{betaeq} given in \eqref{betagiusto}.
     Finally, if $\lambda=0$ and $(p-1)\frac{N}{N-1} < q\le p$,
     the unique radially decreasing solution $v$ such that \eqref{eq:expip},\eqref{eq:expip1} holds is $v=0$. 
   \end{theo}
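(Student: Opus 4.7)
The strategy is to reduce to Theorem~\ref{unic} by means of the nonlinear change of variable \eqref{eq:expip}. I would first verify formally that, for a smooth radial function $v(x)=\tilde v(H^o(x))$ with $w(r):=-\tilde v'(r)>0$ and $V$ defined by \eqref{eq:expip}, the radial ODEs associated to \eqref{eq:rad1} and \eqref{eq:rad1cv} are equivalent. By the homogeneity of $H$ together with $H(DH^o)=1$, the anisotropic operator applied to a radial function reduces to the one-dimensional form $-\mathcal Q_p\tilde v=\frac{1}{r^{N-1}}(r^{N-1}w^{p-1})'$, and the key algebraic identity $(\gamma-1)(q-(p-1))=p-1$ (which follows directly from $\gamma=q/(q-(p-1))$) yields
\begin{equation*}
w(r)^{p-1}=c_\gamma\,\frac{(-V'(r))^{\gamma-1}}{(V(r)+1)^{\gamma-1}}.
\end{equation*}
A direct differentiation using this relation shows that $v$ solves the radial form of \eqref{eq:rad1} if and only if $V$ solves \eqref{eq:rad1cv}.

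For existence when $\lambda>0$, I would take $V=\Phi$ furnished by Theorem~\ref{unic}, namely $V(r)+1=(R/r)^{\beta}$ with $\beta\in[0,(N-\gamma)/\gamma)$ solving \eqref{betaeq}, and invert the change of variable explicitly. From $-V'=\beta R^{\beta}r^{-(\beta+1)}$ and $V+1=(R/r)^{\beta}$ one obtains $w(r)=[(\gamma-1)\beta]^{1/(q-(p-1))}r^{-1/(q-(p-1))}$, so integrating $-v'=w$ from $r$ to $R$ with $v(R)=0$ gives the closed-form expressions stated in the theorem (a power profile when $q<p$, a logarithmic one when $q=p$). The regularity $v\in W_0^{1,s}(\mathcal W_R)$ for $s<\tilde s=N(q-(p-1))$ then follows by computing $\int_0^R |v'(r)|^{s}\,r^{N-1}\,dr$ from the explicit formula, the integral being finite exactly under this bound on $s$; the additional integrability requirements \eqref{eq:expip1} are verified by direct calculation on $\Phi$ and $(\Phi+1)^{\gamma-1}=(R/r)^{\beta(\gamma-1)}$.

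For uniqueness, given any positive radially decreasing distributional solution $v$ of \eqref{eq:rad1} in $W_0^{1,s}(\mathcal W_R)$ satisfying \eqref{eq:expip1}, I would define $V$ via \eqref{eq:expip} and show that $V$ is a positive weak $W_0^{1,\gamma}(\mathcal W_R)$ solution of \eqref{eq:rad1cv}; Theorem~\ref{unic} then forces $V=\Phi$, and $v$ is recovered uniquely by integrating $w^{p-1}=c_\gamma(-V')^{\gamma-1}/(V+1)^{\gamma-1}$ backward from $R$ with $v(R)=0$. The main obstacle is precisely this last step: justifying the nonlinear transformation at the level of weak solutions. The regularity \eqref{eq:expip1} is exactly what is needed here, since $V\in W_0^{1,\gamma}$ together with $(V+1)^{\gamma-1}\in W^{1,\delta}$ for some $\delta>\tilde\delta=(\tilde s/(p-1))'$ is what allows the chain and product rules to be applied in the weak formulation, and what makes the right-hand side of \eqref{eq:rad1cv} pairable with $W_0^{1,\gamma}$ test functions (the power of the weight $H^o(x)^{-\gamma}$ being controlled by the Hardy inequality \eqref{Hardy}). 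Finally, the case $\lambda=0$ is immediate: Theorem~\ref{unic} gives $V\equiv 0$, so the integrand in \eqref{eq:expip} must vanish, hence $w\equiv 0$, and the boundary condition $v(R)=0$ forces $v\equiv 0$.
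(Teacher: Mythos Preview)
Your proposal is correct and follows essentially the same strategy as the paper: build $v$ from the explicit $\Phi$ of Theorem~\ref{unic} via the change of variable, and for uniqueness show that any admissible $v$ produces a $V\in W_0^{1,\gamma}(\mathcal W_R)$ solving \eqref{eq:rad1cv}, then invoke Theorem~\ref{unic}. The paper handles the step you correctly flag as the main obstacle---justifying that $V$ is a weak solution of \eqref{eq:rad1cv}---by following the method of \cite{fm98} and using the Brezis--Browder result \cite{bb79} to make sense of the product $(V+1)^{\gamma-1}(-\mathcal Q_p v)$ in $\mathcal D'(\mathcal W_R)$, which is precisely where the hypothesis \eqref{eq:expip1} (in particular $(V+1)^{\gamma-1}\in W^{1,\delta}$ with $\delta>\tilde\delta$) enters.
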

\begin{proof}
Using the notation of Theorem \ref{unic}, being $0<\lambda< c_\gamma
\Lambda_\gamma$, we can consider $\Phi =(R/r)^\beta-1$,
$0\le \beta<\frac{N-\gamma}{\gamma}$ as the unique positive solution
in $W_0^{1,\gamma}(\W_R)$ of \eqref{eq:rad1cv}. We reason as in
\cite{fermes}, performing the change of variable  
\[
v(r)= (\gamma-1)^{\frac{1}{q-(p-1)}} \int_r^R
\left(
  \frac{-\Phi '(s)}{\Phi (s)+1}
\right)^{\frac{1}{q-(p-1)}} ds = \theta
\left[
  r^{-\frac{p-q}{q-(p-1)}}-R^{-\frac{p-q}{q-(p-1)}}
\right],
\]
with $\theta=[(\gamma-1)\beta]^{\frac{1}{q-(p-1)}}
\frac{q-(p-1)}{p-q}$. A direct computation shows that, being
$q>\frac{N}{N-1}(p-1)$, $v$ belongs to $W_0^{1,s}(\W_R)$, for all
$s<\tilde s= N(q-(p-1))$ and it is a solution of \eqref{eq:rad1}. 
Moreover, the function $V(x)$ defined in \eqref{eq:expip} coincides
with $\Phi (x)$, and, being $0\le\beta< \frac{N-\gamma}{\gamma}$,
there exists $\delta>\tilde\delta$ such that \eqref{eq:expip1} holds. 
 
On the contrary, let us suppose that $v(x)\in W_0^{1,s}(\W_R)$ for any
$s<N(q-(p-1))$, with $v$ is a radially decreasing, and solves 
\eqref{eq:rad1}. Moreover, suppose that the function $V$ defined in
\eqref{eq:expip} verifies \eqref{eq:expip1}. 

Following the method contained in \cite[Proposition 1.8]{fm98}, we
show that $V(x)$ is a solution of \eqref{eq:rad1cv}, in the sense of
\eqref{sol-gamma}. Being $V\in
W_0^{1,\gamma}(\W_R)$, by a density argument and the Hardy inequality
\eqref{Hardy} it is sufficient to
show that
\begin{equation}
  \label{eq:10}
  -\mathcal Q_\gamma(V) =\frac{\lambda}{ c_\gamma H^o(x)^\gamma}
  (V+1)^{\gamma-1} \quad\text{in }\mathcal D'(\W_R). 
\end{equation}

Being $v\in W_0^{1,s}(\W_R)$ for any $s<\tilde s$,  the integral 
$\int_{\W_R} H^{p-1}(Dv)H_\xi (Dv)\cdot 
D\phi\, dx$ is finite as $\phi\in W_0^{1,\delta}(\W_R)$, with
$\delta>\tilde \delta$
given in \eqref{eq:expip1}. This ensures that the  
operator $T:=-\mathcal Q_p v$ belongs to $W^{-1,\delta'}$. Hence, by
\eqref{eq:expip1} the following product 
$(V+1)^{\gamma-1}T$ is well defined in $\mathcal D'$: 
\begin{multline*}
\langle (V+1)^{\gamma-1} T,\varphi \rangle := \langle 
T,\, (V+1)^{\gamma-1} \varphi \rangle =
\int_{\W_R} H(Dv)^{p-1} H_\xi (Dv)\cdot
D\left[(V+1)^{\gamma-1}\varphi\right] dx =
\\
=\int_{\W_R} H(Dv)^{p-1}H_\xi(Dv) \cdot \left[
  (V+1)^{\gamma-1} D\varphi +
  \varphi\, (\gamma-1) (V+1)^{\gamma-1}H(Dv)^{q-p} Dv  \right]
dx,\\ \forall \varphi \in C_0^{\infty}(\W_R).
\end{multline*}
We obtain that
\begin{equation}
  \label{eq:5}
  (V+1)^{\gamma-1} T = -\divergenza \left[ (V+1)^{\gamma-1}
  H(Dv)^{p-1}H_\xi(Dv) \right] +(V+1)^{\gamma-1} H(Dv)^q
\quad\text{in }\mathcal D'.
\end{equation}
Being $v$ a solution of \eqref{eq:rad1}, $-\mathcal
Q_p(v)=H(Dv)^q+\lambda H^o(x)^{-\gamma} \in L^1$. Furthermore,
$[H(Dv)^q+\lambda H^o(x)^{-\gamma}] (V+1)^{\gamma-1}\in L^1$. Indeed, recalling
\eqref{eq:expip1}, we have that  
\[
(V+1)^{\gamma-1} H(Dv)^q \le C \frac{|DV|^\gamma}{V+1} \in L^1,
\]
and $H^o(x)^{-\gamma} (V+1)^{\gamma-1}\varphi \in L^1$ by the
Hardy inequality. Hence, we can use the result of Brezis and Browder
\cite{bb79}, obtaining that, as $\varphi\in \mathcal C_0^{\infty}(\W_R)$, 
\begin{equation*}\label{eq:54}
\int_{\W_R} H^{p-1}(Dv) H_\xi (Dv)\cdot
D\left[(V+1)^{\gamma-1}\varphi\right] dx= \int_{\W_R}
\left[(V+1)^{\gamma-1} \left( H(Dv)^q+\frac{\lambda}{H^0(x)^{\gamma}}
  \right)\right] \varphi \,dx,
\end{equation*}
that is
\begin{equation}
  \label{eq:6}
  (V+1)^{\gamma-1}T= (V+1)^{\gamma-1}\left(
    H(Dv)^q+\frac{\lambda}{H^0(x)^{\gamma}} \right) \quad\text{in
  }\mathcal D'(\W_R).
\end{equation}
On the other hand, it is easy to see that 
\begin{equation}
  \label{eq:7}
- \mathcal Q_\gamma (V) = -\frac{1}{c_\gamma} \divergenza\left[
  (V+1)^{\gamma-1} H(Dv)^{p-1}H_\xi (Dv)  \right] \quad\text{in
}\mathcal D'(\W_R).
\end{equation}
Putting \eqref{eq:5}, \eqref{eq:6} and \eqref{eq:7} together, we get
that $V\in W_0^{1,\gamma}(\W_R)$ satisfies \eqref{eq:10}.
Then $V(x)=\Phi (x)$ by Theorem \ref{unic}, and this concludes the
proof. 
\end{proof}

\begin{rem}
We explicitly observe that problem \eqref{eq:rad1} admits at least 
two nonnegative solutions in $W_{0}^{1,s}(\mathcal W_{R})$, $\forall s<\tilde s$. 
for example, $\lambda=0$, $R=1$ and $N/(N-1)<q<p$, the problem
 \[
  -\mathcal{Q}_{p}(u)=[H(Du)]^{q}, \qquad u\in W_{0}^{1,q}(\mathcal W_{1})
\]
admits the radially decreasing solutions $u_{1}=0$ and 
\[
u_{2}(x)= K \left(\frac{1}{H^{o}(x)^{\frac{p-q}{q-(p-1)}}} - 1\right), \quad K=\frac{q-(p-1)}{p-q} \left(\frac{(N-1)q-(p-1)N}{q-(p-1)}\right)^{\frac{1}{q-(p-1)}}.
\]
As a matter of fact, $u_{2}\in W_{0}^{1,s}(\mathcal W_{1})$, $s<\tilde s$ but, making the change of variable \eqref{eq:expip}, the function
\[
   V(x)=\exp\left[ \frac{q-(p-1)}{p-1}\int_{r}^{1}
         (-u_{2}'(\tau))^{q-(p-1)} d\tau \right]  -1= \left(\frac{1}{r}\right)^{\frac{(N-1)q-(p-1)N}{q-(p-1)}}-1,\quad r=H^{o}(x)
\]
does not verify \eqref{eq:expip1}.
\end{rem}
For the uniqueness issue of problem \eqref{eq:gen}, we refer the reader to 
\cite{bapor,bm95} and the references therein.

\section{A priori estimates and proof of Theorems \ref{thm:ex} and \ref{thm:comp}}

The key role in order to prove Theorem \ref{thm:ex} is played by some a priori estimates, given in Theorem \ref{theo:comp} and in Proposition \ref{propest} below, for the approximating problems
\begin{equation}\label{eq:gen1appr}
  \left\{
    \begin{array}{ll}
      -\divergenza{\left(a(x,u_\eps,Du_\eps)\right)} =
      b_\eps(x,u_\eps,Du_\eps)
      + T_{1/\eps}(f(x)) &
      \text{in }\Omega, \\
      u_\eps = 0 & \text{on }\de\Omega,
    \end{array}
  \right.
\end{equation}
where $\eps>0$, 
\[
b_\eps(x,s,\xi)=\frac{b(x,s,\xi)}{1+\eps |b(x,s,\xi)|},\quad \text{for
  a.e. }x\in \Omega, \forall (s,\xi) \in \R\times \R^N,
\]
and $T_t(s)=\min\{s,\max\{-s,t\}\}$, $t>0$ is the standard
truncature function. Since $|b_\eps|\le 1/\eps$ and $f_{\eps}\in L^{\infty}(\Omega)$, the assumptions \eqref{ellipt}, \eqref{growth} and \eqref{ip-mon} allows to apply the classical results contained in \cite{ll,li69}. Then there exists a weak solution $u_{\eps}\in W_{0}^{1,p}(\Omega)$. Moreover, $u_{\eps}\in L^{\infty}(\Omega)$. 

The theorem below is in the spirit of the comparison results contained
in \cite{tal1,aflt,fermes}.
\begin{theo}\label{theo:comp}
  Let $u_\eps\in W_0^{1,p}(\Omega)\cap L^{\infty}(\Omega)$ be a weak
  solution of \eqref{eq:gen1appr}, under the assumptions \eqref{ellipt}-\eqref{b=1}, with $f\in M^{\frac N
    \gamma}(\Omega)$ such that
  \[
  f^\star(x) \le \frac{\lambda}{H^o(x)^\gamma},\quad x\in
  \Omega^\star,\quad\text{for some }{0\le \lambda< c_\gamma\Lambda_\gamma},
  \]
  with $c_\gamma=(\gamma-1)^{\gamma-1}$, and
  $\Lambda_\gamma=\left(\frac{N-\gamma}{\gamma}\right)^\gamma$.
  Then,
  \begin{equation}
    \label{eq:thesi}
  u_\eps^*(s)\le v^*(s), \quad s\in ]0,|\Omega|].
  \end{equation}
  where $v\in W^{1,s}_0(\Omega^\star)$, $\forall s<N(q-(p-1))$ is the
  solution of problem  
  \begin{equation*}
  \left\{
    \begin{array}{ll}
      -\mathcal Q_p v= H(Dv)^q + \dfrac{\lambda}{H^o(x)^\gamma} &
      \text{in }\Omega^\star, \\[.2cm]  
      v = 0 & \text{on }\de \Omega^\star,
    \end{array}
  \right.
\end{equation*}
given by Theorem \ref{unic2}. 
\end{theo}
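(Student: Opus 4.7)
The argument is an anisotropic Talenti-type symmetrization, in the spirit of \cite{aflt,fermes}, coupled with the change of variable already behind Theorem \ref{unic2}. Fix $t\ge 0$ and, for $h>0$, test the weak formulation of \eqref{eq:gen1appr} with $\varphi_h=T_h((u_\eps-t)^+)/h$. Letting $h\to 0^+$, the coarea formula \eqref{fr}, the ellipticity \eqref{ellipt}, the growth \eqref{b=1} on $b_\eps$, and the Hardy--Littlewood inequality applied to the $f$-term give, for a.e.\ $t\ge 0$,
\[
-\frac{d}{dt}\int_{\{u_\eps>t\}}\!H(Du_\eps)^p\,dx \;\le\; \int_{\{u_\eps>t\}}\!H(Du_\eps)^q\,dx \;+\; \int_0^{\mu_\eps(t)}\!f^*(\sigma)\,d\sigma,
\]
where $\mu_\eps(t)=|\{u_\eps>t\}|$. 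Applying H\"older's inequality to $P_H(\{u_\eps>t\})=-\frac{d}{dt}\int_{\{u_\eps>t\}}H(Du_\eps)\,dx$ together with the anisotropic isoperimetric inequality \eqref{isop}, this combines into
\[
(N\kappa_N^{1/N})^p\,\mu_\eps(t)^{p(1-1/N)} \;\le\; (-\mu_\eps'(t))^{p-1}\left[\int_{\{u_\eps>t\}}\!H(Du_\eps)^q\,dx + \int_0^{\mu_\eps(t)}\!f^*(\sigma)\,d\sigma\right].
\]

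The main difficulty is the nonlinear super-level integral of $H(Du_\eps)^q$, which cannot be controlled directly in terms of $u_\eps^*$ or $\mu_\eps$ alone. To bypass it, I would introduce the exponential-type transform $W_\eps=\Psi(u_\eps)$ with
\[
\Psi(s)\,=\,\exp\!\left[\frac{1}{\gamma-1}\int_0^s \eta_\eps(\tau)\,d\tau\right]-1,
\]
where $\eta_\eps(\tau)$ is an appropriate rearrangement-compatible average of $H(Du_\eps)^{q-(p-1)}$ on the level set $\{u_\eps=\tau\}$, chosen in such a way that, by the chain-rule computation already carried out in \eqref{eq:5}--\eqref{eq:7} (run now for an inequality and justified via a Brezis--Browder argument as in \cite{bb79}), one has the distributional inequality
\[
-\mathcal Q_\gamma W_\eps \;\le\; \frac{\lambda}{c_\gamma H^o(x)^\gamma}(W_\eps+1)^{\gamma-1}\qquad\text{in }\Omega.
\]
The standing assumption $\lambda<c_\gamma\Lambda_\gamma$ and the Hardy inequality \eqref{Hardy} guarantee that $W_\eps\in W_0^{1,\gamma}(\Omega)$, so the right-hand side makes distributional sense.

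Since the $\gamma$-inequation for $W_\eps$ carries no gradient nonlinearity, a direct anisotropic Talenti comparison (isoperimetric \eqref{isop} plus coarea \eqref{fr} plus rearrangement, exactly as in \cite{aflt}), combined with the uniqueness statement of Theorem \ref{unic}, yields $W_\eps^*(s)\le V^*(s)$ on $(0,|\Omega|]$, with $V$ the unique positive solution of \eqref{eq:rad1cv} on $\Omega^\star$. Inverting the monotone transformation $\Psi$, and using the explicit correspondence $V=\Psi(v)$ established in Theorem \ref{unic2}, this comparison transfers to $u_\eps^*(s)\le v^*(s)$, which is \eqref{eq:thesi}. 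The step I expect to be the real obstacle is the rigorous construction of $\eta_\eps$, i.e.\ verifying that the formal exponential substitution produces an admissible $W_\eps\in W_0^{1,\gamma}(\Omega)$ for which the displayed $\gamma$-inequation holds in $\mathcal D'(\Omega)$; once this is secured, the rest follows the classical Talenti template in the anisotropic framework.
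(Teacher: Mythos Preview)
Your opening step (test with truncations, use \eqref{ellipt}, \eqref{b=1}, Hardy--Littlewood, coarea and the anisotropic isoperimetric inequality) is exactly how the paper begins, and your first displayed inequality coincides with the paper's \eqref{eq:primo}. The divergence from the paper, and the real problem, is the second step.

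The exponential substitution $W_\eps=\Psi(u_\eps)$ cannot produce a clean $\gamma$-inequation for a \emph{non-radial} $u_\eps$. If you run the chain rule underlying \eqref{eq:5}--\eqref{eq:7} with $W_\eps=\Psi(u_\eps)$ and $\Psi'=\tfrac{\eta_\eps}{\gamma-1}(\Psi+1)$, you find
\[
H(DW_\eps)^{\gamma-1}H_\xi(DW_\eps)=\frac{\eta_\eps(u_\eps)^{\gamma-1}}{c_\gamma}(W_\eps+1)^{\gamma-1}H(Du_\eps)^{\gamma-1}H_\xi(Du_\eps),
\]
and the identity \eqref{eq:7} (which is what links $-\mathcal Q_\gamma W_\eps$ to $\divergenza[(W_\eps+1)^{\gamma-1}H(Du_\eps)^{p-1}H_\xi(Du_\eps)]$) forces the \emph{pointwise} relation $\eta_\eps(u_\eps(x))=H(Du_\eps(x))^{q-p}$. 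For the radial $v$ this is fine because $H(Dv)$ is constant on level sets; for a generic $u_\eps$ it is not, so no function $\eta_\eps$ of the level alone can do the job. If you replace $\eta_\eps$ by a level-set average (or sup/inf), the mismatch $\eta_\eps(u_\eps)^{\gamma-1}H(Du_\eps)^{\gamma-1}-H(Du_\eps)^{p-1}$ sits \emph{inside} a divergence and has no sign, so you do not obtain the claimed distributional inequality for $-\mathcal Q_\gamma W_\eps$. This is precisely the obstacle you flag at the end, and it is fatal to the strategy as written.

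The paper avoids this by never transforming $u_\eps$ at the PDE level. It stays with the rearrangement: from \eqref{eq:primo} it controls $\int_{|u_\eps|>t}H(Du_\eps)^q$ by H\"older against $-\frac{d}{dt}\int_{|u_\eps|>t}H(Du_\eps)^p$, applies the Gronwall Lemma in $t$ to the resulting linear integral inequality, and then uses a Brothers--Ziemer type identity to rewrite everything in the variable $s=\mu_{u_\eps}(t)$, arriving at the one-dimensional integro-differential inequality \eqref{eq:ineq} for $(-u_\eps^*)'$. The radial $v$ of Theorem~\ref{unic2} satisfies \eqref{eq:ineq} with equality, and the comparison $(-u_\eps^*)'\le(-v^*)'$ (hence \eqref{eq:thesi}) then follows line by line from \cite[Theorem 4.1]{fermes}. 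The exponential change of variable appears only in the analysis of the radial model (Theorems~\ref{unic} and \ref{unic2}), where it is legitimate; it is the structure of $v$, through the conditions \eqref{eq:expip}--\eqref{eq:expip1}, that makes the Ferone--Messano comparison argument go through.
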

\begin{proof}
The first step consists in proving the following differential inequality:
\begin{multline}
  \label{eq:ineq}
  (-u_\eps^*(s))' (N\kappa_N^{1/N}s^{1-1/N})^{\frac{p}{p-1}} \le \\
  \le \left[ \int_0^s \lambda \left(\frac{\kappa_N}{\varrho}\right)^{\gamma/N}
    \left\{
        \exp\left(
          \int_\varrho^{s} \frac{1}{{(N\kappa_N^{1/N})}^{p-q}}
\frac{ {(-(u_\eps^*)'(\tau))}^{q-(p-1)}}{\tau^{(1-1/N)(p-q)}}d\tau
        \right)
      \right\}d\varrho
    \right]
    \; \text{a.e. in} ]0,|\Omega|[.
\end{multline}

Given $t,h>0$, we take $\varphi=(T_{t+h}(u_\eps) - T_{t}(u_\eps))\sign
u_\eps$  as test function for \eqref{eq:gen}. Hence we get
\begin{equation}
  \label{eq:primo}
-\frac{d}{dt} \int_{|u_\eps|>t} H(Du_\eps)^p dx \le \int_{|u_\eps|>t}
H(D u_\eps)^q dx + \int_{|u_\eps|>t} \frac{\lambda}{H^o(x)^\gamma}.  
\end{equation}
The H\"older inequality gives that
\[
\int_{|u_\eps|>t} H(Du_\eps)^q dx \le \int_t^{+\infty}
\left[
  \left(
    -\frac{d}{d\tau} \int_{|u_\eps|>\tau} H(D u_\eps)^p dx
  \right)^{q/p}
  (-\mu'_{u_\eps}(\tau))^{1-q/p}
\right] d\tau.
\]
Hence, the H\"older inequality, the coarea formula \eqref{fr} and the
isoperimetric inequality \eqref{isop} give that
\[
\left( -\frac{d}{dt}
  \int_{|u_\eps|>t} H(Du_\eps)^p dx
\right)^{\frac{p-q}{p}}
  \ge (N \kappa_N^{1/N}\mu_{u_\eps}(t)^{1-1/N})^{p-q}
  (-\mu_{u_\eps}'(t))^{-\frac{(p-1)(p-q)}{p}}.
\]
Using the above inequalities and the Hardy-Littlewood inequality in
\eqref{eq:primo}, we obtain that
\begin{multline*}
  -\frac{d}{dt} \int_{|u_\eps|>t} H(Du_\eps)^p dx \le \\
  \le \int_0^{\mu_{u_\eps}(t)}
  \lambda\left(\frac{\kappa_N}{\varrho}\right)^{\gamma/N} d\varrho
  +\frac{1}{(N\kappa_N^{1/N})^{p-q}}\int_t^{+\infty}
  \left(
    -\frac{d}{d\tau} \int_{|u_\eps|>\tau} H(Du_\eps)^p dx
  \right)
  \left( \frac{-\mu_{u_\eps}'(\tau)}{(\mu_{u_\eps}(\tau))^{1-1/N}}
  \right)^{p-q} d\tau.
\end{multline*}
The Gronwall Lemma guarantees that
\begin{multline*}
  -\frac{d}{dt} \int_{|u_\eps|>t} H(Du_\eps)^p dx \le \int_0^{\mu_{u_\eps}(t)}
  \lambda\left(\frac{\kappa_N}{\varrho}\right)^{\gamma/N} d\varrho +
\int_t^{+\infty}   \frac{1}{(N\kappa_N^{1/N})^{p-q}}
  \left( \frac{-\mu_{u_\eps}'(\tau)}{(\mu_{u_\eps}(\tau))^{1-1/N}}
  \right)^{p-q} \times \\
  \times
  \left(
    \int_0^{\mu_{u_\eps}(\tau)}
    \lambda\left(\frac{\kappa_N}{\varrho}\right)^{\gamma/N} d\varrho
  \right)
  \exp\left\{
  \int_t^{\tau}   \frac{1}{(N\kappa_N^{1/N})^{p-q}}
  \left( \frac{-\mu_{u_\eps}'(r)}{(\mu_{u_\eps}(r))^{1-1/N}}
  \right)^{p-q}dr \right\} d\tau.
\end{multline*}
As matter of fact, reasoning as in \cite{bz88,tabest} (see also\cite{estro,fvpolya,fvpolya2}) it is possible to prove that
\begin{multline*}
\int_t^{\tau} \left( \frac{-\mu_{u_\eps}'(r)}{(\mu_{u_\eps}(r))^{1-1/N}}
  \right)^{p-q}dr = \frac{1}{N^{q-(p-1)}\kappa_N^{\frac{N-p+q}{N}}}
  \int_{\tau > u_\eps^\star(x)>t}
  \frac{H(Du_\eps^\star)^{q-(p-1)}}{H^o(x)^{N-1}} dx = \\
  =\int_{\mu_{u_\eps}(\tau)}^{\mu_{u_\eps}(t)}
  \frac{ {(-(u_\eps^*)'(r))}^{q-(p-1)}}{r^{(1-1/N)(p-q)}} dr.
\end{multline*}
Then we can proceed similarly than \cite{fermes}, and get
\eqref{eq:ineq}.

Now we observe that the solution $v$ obtained in Theorem \ref{unic2}
verifies \eqref{eq:ineq}, where the inequality is replaced by an
equality. Hence, from now on, recalling that the function $V(x)$
defined in \eqref{eq:expip} verifies \eqref{eq:expip1}, we can follow
line by line the proof of \cite[Theorem 4.1]{fermes}, in order to
get that
\begin{equation*}
  (-u_\eps^*(s))'\le (-v^*(s))', \quad\text{for a.e. }s\in ]0,|\Omega|],
\end{equation*}
and this gives the quoted comparison \eqref{eq:thesi}.
\end{proof}
From the proof of the above Theorem, we easily get estimates of the
solutions in Lebesgue and Sobolev spaces.
\begin{prop}
\label{propest}
  Under the assumptions of Theorem \ref{theo:comp}, the following
  uniform estimates hold:
  \begin{itemize}
  \item[(1)] if $p\ge q>\frac{N}{N-1}(p-1)$,
    \[
    \|u_\eps\|_s\le C, 
    \]
    for all $s<+\infty$ if $p=q$, or $s<\frac{N[q-(p-1)]}{p-q}$
    otherwise.
  \item[(2)] if $p\ge q>p-1+\frac p N$, then
    \[
    \| Du \|_p \le C.
    \]
  \item[(3)] if $p-1+\frac p N\ge q > \frac{N}{N-1}(p-1)$, then
    \[
    \| DT_k(u_\eps) \|_p \le C,\quad \| D u_\eps  \|_r \le C,
    \]
    for any $k>0$ and all $r<N[q-(p-1)]$.
  \end{itemize}
  In any case, $C$ denotes a constant independent on $\eps$.
\end{prop}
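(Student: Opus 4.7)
The plan is to deduce all three estimates from the pointwise rearrangement comparison $u_\eps^* \le v^*$ of Theorem \ref{theo:comp} together with judicious choices of test functions in \eqref{eq:gen1appr}. Part (1) is essentially immediate: since $\|u_\eps\|_s = \|u_\eps^*\|_{L^s(0,|\Omega|)} \le \|v\|_s$, I only need the explicit form of $v$ from Theorem \ref{unic2}. When $q<p$, $v$ behaves like $H^o(x)^{-(p-q)/(q-(p-1))}$ near the origin, so $v \in L^s(\Omega^\star)$ iff $s < N(q-(p-1))/(p-q)$; when $q=p$, $v$ is logarithmic and lies in every $L^s$, $s<\infty$.

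For part (2), I would test \eqref{eq:gen1appr} against $u_\eps$ itself. Using \eqref{ellipt} and \eqref{b=1} this yields
\[
\int_\Omega H(Du_\eps)^p\, dx \le \int_\Omega H(Du_\eps)^q |u_\eps|\, dx + \int_\Omega |f||u_\eps|\, dx.
\]
Young's inequality with conjugate exponents $p/q$ and $p/(p-q)$ splits $H(Du_\eps)^q |u_\eps| \le \eta H(Du_\eps)^p + C_\eta |u_\eps|^{p/(p-q)}$; after absorbing the first term, the remainder is controlled by part (1) precisely because $p/(p-q) < N(q-(p-1))/(p-q)$ under the hypothesis $q > p-1+p/N$. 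For the borderline $q=p$ this Young step fails, and I would instead use the exponential test function $\phi(u_\eps) = \sign(u_\eps)(e^{\alpha |u_\eps|}-1)$ with $\alpha>1$; the logarithmic growth of $v$ and the comparison $u_\eps^* \le v^*$ guarantee that $e^{\alpha|u_\eps|}$ is uniformly integrable for $\alpha$ small, which is enough to close the inequality.

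In part (3), where $q \le p-1+p/N$, the test function $u_\eps$ no longer suffices. I would use the sublinear test $\phi_\delta(u_\eps) = \sign(u_\eps)[(1+|u_\eps|)^{1-\delta}-1]/(1-\delta)$ with a parameter $\delta \in (0,1)$, obtaining the weighted bound
\[
\int_\Omega H(Du_\eps)^p (1+|u_\eps|)^{-\delta}\, dx \le C\int_\Omega H(Du_\eps)^q(1+|u_\eps|)^{1-\delta}\, dx + C\int_\Omega |f|(1+|u_\eps|)^{1-\delta}\, dx.
\]
The same Young's inequality gives $H(Du_\eps)^q(1+|u_\eps|)^{1-\delta} \le \eta H(Du_\eps)^p(1+|u_\eps|)^{-\delta} + C_\eta(1+|u_\eps|)^{p/(p-q)-\delta}$, and the right-hand side is uniformly controlled by part (1) whenever $\delta > \delta_0 := [p-N(q-(p-1))]/(p-q)$; the interval $(\delta_0,1)$ is nonempty exactly under the hypothesis $q > N(p-1)/(N-1)$. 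Fixing such a $\delta$ and applying H\"older's inequality,
\[
\int_\Omega H(Du_\eps)^r\, dx \le \Bigl(\int_\Omega H(Du_\eps)^p(1+|u_\eps|)^{-\delta}\, dx\Bigr)^{r/p} \Bigl(\int_\Omega (1+|u_\eps|)^{r\delta/(p-r)}\, dx\Bigr)^{(p-r)/p},
\]
and letting $\delta \searrow \delta_0$ gives the sharp range $r < N(q-(p-1))$. The truncation bound $\|DT_k u_\eps\|_p \le C$ then follows by testing directly against $T_k u_\eps$ and using the just-proved uniform control on $\int_\Omega H(Du_\eps)^q\, dx$.

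The delicate step is the balancing in part (3): $\delta$ must be large enough for Young's inequality to close via part (1), yet small enough for the weighted gradient bound to yield a genuine $L^r$-estimate through H\"older, and the sharp threshold $q > N(p-1)/(N-1)$ is exactly what makes both constraints compatible.
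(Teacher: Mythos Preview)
Your proof of part~(1) matches the paper's. For parts~(2) and~(3), however, you take a genuinely different route. The paper never tests \eqref{eq:gen1appr} against $u_\eps$ or any nonlinear function of it; instead it stays entirely inside the rearrangement framework developed in the proof of Theorem~\ref{theo:comp}. Integrating by parts in the level-set inequality \eqref{eq:ineq} and using $(-u_\eps^*)'\le(-v^*)'$ with the explicit $v$, the paper obtains the pointwise bound
\[
-\frac{d}{dt}\int_{|u_\eps|>t} H(Du_\eps)^p\,dx \;\le\; C\,\mu_{u_\eps}(t)^{1-\gamma/N},
\]
and then simply integrates in $t$ (over $[0,\infty)$ for the $L^p$ bound, over $[0,k]$ for the truncation bound, and against $(-\mu')^{1-r/p}$ via H\"older for the $L^r$ bound). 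This treats the whole range $\frac{N}{N-1}(p-1)<q\le p$ uniformly, with no split between $q<p$ and $q=p$.

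Your test-function/Young approach is the more classical PDE argument and also works, at the price of more casework. One slip to fix: in the borderline $q=p$ you say the exponential test function requires $\alpha>1$ (correct, to absorb the gradient term), but then say $e^{\alpha|u_\eps|}$ is integrable ``for $\alpha$ small''. These are in tension; what you actually need is $\alpha\in\bigl(1,\tfrac{N-p}{(p-1)\beta}\bigr)$, and this interval is nonempty precisely because $\beta<\tfrac{N-\gamma}{\gamma}=\tfrac{N-p}{p}$ from \eqref{betagiusto}, which in turn encodes the smallness hypothesis $\lambda<c_\gamma\Lambda_\gamma$. The paper's level-set approach sidesteps this delicacy entirely, which is its main advantage; your approach, on the other hand, is more self-contained and does not rely on the fine differential inequality \eqref{eq:ineq}.
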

\begin{proof}
  Using \eqref{eq:thesi} and the equimeasurability of the
  rearrangements, we have that
  \[
  \|u\|_s \le \|v\|_s,
  \]
  and the explicit expression of $v$, given by
Theorem \ref{unic2}, allows to obtain immediately the estimate
  in (1).
  
  In order to get the gradient estimates in (2) and (3), we recall the
  proof of Theorem \ref{theo:comp}, and integrate by parts in
\eqref{eq:ineq}. It follows that 
\begin{multline}
  \label{eq:ineqgrad}
  -\frac{d}{dt} \int_{|u_\eps|>t} H(Du_\eps)^pdx \le \\ \le
  \lambda \int_t^{+\infty}
  \left(\frac{\kappa_N}{\mu_{u_\eps}(\tau)}\right)^{\gamma/N}
(-\mu_{u_\eps}'(\tau)) \exp 
  \left\{
    \frac{1}{(N\kappa_N^{1/N})^{p-q}}
    \int_t^{\tau}
    \left(
      \frac{-\mu'_{u_\eps}(r)}{\mu_{u_\eps}(r)^{1-1/N}}
    \right)^{p-q}dr 
  \right\}d\tau \le \\ \le
  \lambda \kappa_N^{\gamma/ N} \int_0^{\mu_{u_\eps}(t)}
  \varrho^{-\gamma/N}
  \exp\left\{
    \int_{({\varrho}/{\kappa_N})^{1/N}}^{({\mu_{u_\eps}(t)}/{\kappa_N})^{1/N}}
    (-v'(r))^{q-(p-1)}dr \right\}d\varrho.
\end{multline}
Last inequality follows by a change of variable and \eqref{eq:thesi},
recalling also that $v(r)=v^*(\kappa_N r^N)$. 

Hence, substituting the explicit expression of $v$, after some
computation we get that
\begin{equation}
  \label{eq:fin}
  -\frac{d}{dt} \int_{|u_\eps|>t} H(Du_\eps)^pdx \le
  C \mu_{u_\eps}^{1-\frac\gamma N}(t),
\end{equation}
where $C=C(N,\kappa_N,\gamma,\beta,\lambda)\ge 0$.

Now, suppose that $p\ge q>p-1-\frac{p}{N}$. Integrating
\eqref{eq:fin}, we get:
\begin{equation*}
  \int_\Omega H(Du_\eps)^p dx \le C \int_0^{|\Omega|}
  s^{^{1-\frac \gamma N}}(-u^*(s))'ds \le C \int_0^{|\Omega|}
  s^{-\frac{p}{N[q-(p-1)]} } ds, 
\end{equation*}
and the right-hand side is finite if and only if
$q>p-1+\frac{p}{N}$. This proves (2). 

Cosider now the condition in (3), $\frac{N}{N-1}(p-1)< q\le
p-1-\frac{p}{N}$. We have that
\[
-\frac{d}{dt} \int_{|u_\eps|>t} H(Du_\eps)^pdx =
\frac{d}{dt} \int_{|u_\eps|\le t} H(Du_\eps)^pdx \quad \text{a.e. in
}[0,+\infty[.
\]
Hence we can integrate \eqref{eq:fin} between $0$ and $k$ and
reason as before, obtaining that
\[
\int_\Omega |DT_k(u_\eps)|^p \le C k^{N-\frac{p}{q-(p-1)}}.
\]
Moreover, if $r<p$, using the H\"older inequality we get 
\begin{equation}
  \label{eq:hold}
-\frac{d}{dt} \int_{|u_\eps|> t} H(Du_\eps)^{r} dx \le
\left(
  -\frac{d}{dt} \int_{|u_\eps|> t} H(Du_\eps)^p dx
\right)^{\frac{r}{p}} [-\mu_{u_\eps}'(t)]^{1-\frac{r}{p}}.
\end{equation}
Using \eqref{eq:ineqgrad} and proceeding as before, we can integrate
both terms of \eqref{eq:hold}, obtaining that
\[
\int_{\Omega} H(Du_\eps)^r dx \le C \int_0^{|\Omega|}
s^{-\frac{r}{N[q-(p-1)]}} ds,
\]
which is finite if and only if $r<N(q-(p-1))$.
\end{proof}

\begin{proof}[Proof of Theorems \ref{thm:ex} and \ref{thm:comp}]
The estimates of Proposition \ref{propest}, in a standard way, allow to obtain that the approximating sequence $u_{\eps}$ converges, up to a subsequence, to a function $u$ which solves problem \eqref{eq:gen}. Moreover, $u^*_\eps\rightarrow u^*$ in some Lebesgue space, and then $u^{*}_{\eps}$ converges pointwise (up to a subsequence) to $u^*$ 
in $]0,|\Omega|]$. Passing to the limit in \eqref{eq:thesi}, we are done.
\end{proof}

\begin{rem}
 We stress that the bounds \eqref{eq:lin} and \eqref{eq:lin2} on $H$
  and $H^o$, and the conditions
  \eqref{ellipt}, \eqref{b=1} and \eqref{ipf} give that
  \[
	a(x,s,\xi)\cdot \xi \ge c_1^p |\xi|^p,\quad |b(x,s,\xi)| \le
  	c_2^q\,|\xi|^q,
  \]
  and
  \[
  f^\star(x) \le \frac{\lambda c_2^p}{|x|^p}.
  \]
  Hence, under the above growth conditions, the classical Schwarz
  symmetrization tecnique can be applied to problem \eqref{eq:gen}. In
  this way, it is possible to obtain results analogous to those of
  Theorems \ref{theo:comp} and Proposition \ref{propest}, and hence to those of 
  Theorems \ref{thm:ex}, \ref{thm:comp} (in the spirit of the existence results, for example,
  of \cite{fermes,afm14,femu13}), but requiring a stronger 
  assumption on the smallness of $\lambda>0$. This justifies the use of the more 
  general convex symmetrization (see also \cite{aflt} 
  and Remark 3.4 in \cite{dpg4}). 
\end{rem}
\begin{rem}
As regards the optimality of the smallness assumption on $f$, we refer the reader to \cite[Section 3]{afm14}. In such a paper the authors give some examples in the Euclidean radial case where, if $\lambda>0$ and \eqref{condf} is not satisfied, then in a suitable sense, there are no solutions.
\end{rem}

\bibliography{/Users/francescodellapietra/Documents/Biblioteca/library}{}
\bibliographystyle{abbrv}
\end{document}